\def\ez{\epsilon}
\def\bz{\beta}
\def\gz{{\gamma}}
\def\bint{{\ifinner\rlap{\bf\kern.35em--}
		\int\else\rlap{\bf\kern.45em--}\int\fi}\ignorespaces}
\def\bbint{{\ifinner\rlap{\bf\kern.35em--}
		\hspace{0.078cm}\int\else\rlap{\bf\kern.45em--}\int\fi}\ignorespaces}
\def\diam{{\mathop\mathrm{\,diam\,}}}
\newtheorem{thm}{Theorem}[section]
\newtheorem{lem}[thm]{Lemma}
\newtheorem{prop}[thm]{Proposition}
\newtheorem{cor}[thm]{Corollary}
\newtheorem{defi}[thm]{Definition}
\numberwithin{equation}{section}
\theoremstyle{remark}
\newtheorem{rem}[thm]{Remark}
\def\bint{{\ifinner\rlap{\bf\kern.35em--}
		\int\else\rlap{\bf\kern.45em--}\int\fi}\ignorespaces}
\title[Carrot John domains in variational problems]{Carrot John domains in variational problems}
\author{Weicong Su, Yi Ru-Ya Zhang}
\date{\today}
\address{Academy of Mathematics and Systems Science, the Chinese Academy of Sciences, Beijing 100190, P. R. China}
\email{suweicong@amss.ac.cn}  
\email{yzhang@amss.ac.cn}
\thanks{Both of the authors are funded by National Key R\&D Program of China (Grant No. 2021YFA1003100), the Chinese Academy of Science and NSFC grant No. 12288201.
}
\subjclass[2000]{49Q20}
\keywords{carrot John domains, lower semicontinuity}
\begin{document}

	\begin{abstract}
		In this paper, we explore carrot John domains within variational problems, dividing our examination into two distinct sections. The initial part is dedicated to establishing the lower semicontinuity of the (optimal) John constant with respect to Hausdorff convergence for bounded John domains. This result holds promising implications for both shape optimization problems and Teichm\"uller theory.
		
		In the subsequent section, we demonstrate that an unbounded open set satisfying the carrot John condition with a center at $\infty$, appearing in the Mumford-Shah problem, can be covered by a uniformly finite number of unbounded John domains (defined conventionally through cigars). These domains, in particular, support   Sobolev-Poincar\'e inequalities.
	\end{abstract}

	\maketitle
	
	\section{Introduction}
	In the realm of shape optimization problems, instances frequently arise wherein the objective is to identify the optimal class of sets, denoted as 
	$U$, based on the ratio of functionals that incorporate the norm of a specific class of Sobolev functions 
	$u$, the norm of its gradient $Du$, and the norm of its trace $u|_{\partial U}$ on 
	$\partial U$.  
	A prototypical illustration of such a scenario is the pursuit of the optimal sets $U\subset \mathbb R^n$ for the first 
	$p$-Dirichlet eigenvalue, where for $1<p<\infty$ and $a>0$,  
	$$\min_{|U|=a}\left\{\int_{U} |Du|^p \, dx \colon   u\in W^{1,\,p}_0(U), \|u\|_{L^p(U)}=1\right\}. $$
	According to the Rayleigh-Faber-Krahn inequality, it can be deduced that this quantity is not inferior to the corresponding Dirichlet eigenvalue of a Euclidean ball with a volume of 
	$a$. Subsequent research, particularly through transportation techniques as explored in \cite{MV2005, MV2008}, has revealed that  \emph{balls have the worst best Sobolev inequalities}. To be more specific, for any locally Lipschitz open domain $\Omega$ in $\mathbb{R}^n$ and $1\le p<n$, we define 
    $$\Phi_{\Omega}^{(p)}(T):=\inf\{\|\nabla f\|_{L^p(\Omega)}:\| f\|_{L^{p^*}(\Omega)}=1,\,\| f\|_{L^{p^\#}(\partial\Omega)}=T,\, f\in L_{loc}^1(\Omega)\text{ with }\lim_{x\to \infty}f=0\},$$
    where $p^*=\frac{np}{n-p}$ and $p^{\#}=\frac{(n-1)p}{np}$. Then the unit ball $B$ has the lowest $\Phi$-curve in the following sense:
    $$\Phi_{\Omega}^{(p)}(T)\ge \Phi_{B}^{(p)}(T)\quad \text{on }[0, T_n(p)],$$
    where $T_{n}(p):=\left(n|B|^{1/n}\right)^{1/p^{\#}}$ and $|B|$ is the Lebesgue measure of $B$. 
      Additional insights and recent advancements in this domain can be found in \cite{MN2023} and its associated references.

	Conversely, a distinctive category of domains, termed as \emph{John domains}, supports for Sobolev-Poincar\'e inequalities.
	A (bounded) domain $\Omega\subset \mathbb R^n$ is $J$-John for some $J\ge 1$ if there exists a distinguished point $x_0\in \Omega$ so that, for any $x\in \Omega$, one can find a curve $\gamma\subset \Omega$ joining $x$ to $x_0$ satisfying 
	$$\ell(\gamma[x,\,y])\le J d(y,\,\partial \Omega) \ \text{ for each } y\in \gamma, $$
	where $\gamma[x,\,y]$ is the subcurve of $\gamma$ joining $x$ and $y$. The constant $J$ is usually called the \emph{John constant}. 
	Heuristically speaking,  $\Omega$ contains a uniformly  linearly opened twisted cone  at every $x\in \Omega$; see Figure~\ref{fig:John}. 
	Standard examples of John domains encompass Lipschitz domains in any $\mathbb R^n$, and quasidisks in the plane, which include von Koch's snowflakes, see e.g.\ \cite[Chapter 6]{GH2012}. Stemming from the definition of a John domain and the Lebesgue differentiation theorem, it can be deduced that the boundary of a John domain possesses a Lebesgue measure of $0$.
	
	\begin{figure}[ht]
		\centering
		
		\def\svgwidth{\columnwidth}
		\resizebox{0.7\textwidth}{!}{
\begingroup%
  \makeatletter%
  \providecommand\color[2][]{%
    \errmessage{(Inkscape) Color is used for the text in Inkscape, but the package 'color.sty' is not loaded}%
    \renewcommand\color[2][]{}%
  }%
  \providecommand\transparent[1]{%
    \errmessage{(Inkscape) Transparency is used (non-zero) for the text in Inkscape, but the package 'transparent.sty' is not loaded}%
    \renewcommand\transparent[1]{}%
  }%
  \providecommand\rotatebox[2]{#2}%
  \newcommand*\fsize{\dimexpr\f@size pt\relax}%
  \newcommand*\lineheight[1]{\fontsize{\fsize}{#1\fsize}\selectfont}%
  \ifx\svgwidth\undefined%
    \setlength{\unitlength}{209.76377106bp}%
    \ifx\svgscale\undefined%
      \relax%
    \else%
      \setlength{\unitlength}{\unitlength * \real{\svgscale}}%
    \fi%
  \else%
    \setlength{\unitlength}{\svgwidth}%
  \fi%
  \global\let\svgwidth\undefined%
  \global\let\svgscale\undefined%
  \makeatother%
  \begin{picture}(1,0.35754506)%
    \lineheight{1}%
    \setlength\tabcolsep{0pt}%
    \put(0,0){\includegraphics[width=\unitlength,page=1]{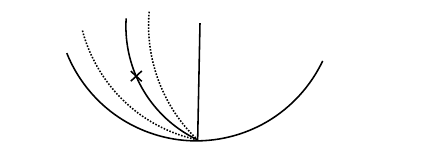}}%
    \put(0.32715372,0.18324185){\makebox(0,0)[lt]{\lineheight{1.25}\smash{\begin{tabular}[t]{l}$y$\end{tabular}}}}%
    \put(0.45855153,0.05452563){\makebox(0,0)[lt]{\lineheight{1.25}\smash{\begin{tabular}[t]{l}$x$\end{tabular}}}}%
  \end{picture}%
\endgroup%
}
		\caption{A  domain $\Omega$ is John if, heuristically speaking,  it contains a uniformly  linearly opened twisted cone  at every $x\in \Omega$. }
		\label{fig:John}
	\end{figure}

	For a domain $\Omega\subset \mathbb R^n$ supporting a $(p,p^*)$-Sobolev-Poincar\'e inequality for  $1\le p<n$, it implies that, for every $u\in W^{1,\,p}(\Omega),$ one has
	$$\inf_c \left(\int_{\Omega} |u-c|^{p^*}\, dx\right)^{\frac 1 {p^*}}\le C(n,\,p,\,\Omega) \left(\int_{\Omega} |Du|^{p}\,dx \right)^{\frac 1 {p}},$$
	where $p^*=\frac {np}{n-p}$ is the critical Sobolev exponent. 
	For comprehensive studies on the  Sobolev-Poincar\'e inequality, we recommend consulting \cite{B1988} and \cite{GR1983}. Furthermore, for an exploration of this inequality in the context of general metric measure spaces, encompassing Carnot groups, \cite{HK2000} serves as a valuable resource. 
	Moreover, in the specific context of John domains, which inherently support a Poincar\'e inequality, one can also establish trace inequalities for Sobolev functions with additional assumptions. For example,  a type of Poincar\'e inequality \cite[Theorem 4.4]{BS2007} holds when the domain is inner uniform (note that an inner uniform domain is John). Then the results in  \cite{M2017}, \cite{LS2018} and \cite{MSS2018} yield the desired trace inequalities in this domain.
	
	In contrast, Buckley and Koskela, as shown in \cite{BK1995}, revealed that a domain 
	$\Omega\subset\mathbb R^n$, possessing finite volume and adhering to a ball-separation property, supports Sobolev-Poincar\'e inequalities. This characteristic is particularly evident in scenarios involving conformal deformations, as discussed in \cite{BHK2001, BB2003}, such as bounded and simply connected domains in the plane.  The implications of this discovery underscore a deep connection between shape optimization problems involving Sobolev-Poincar\'e inequalities and the concept of John domains.
	
	This correlation prompts the need for a refined definition of the John constant, one that can be extended to arbitrary Euclidean domains. The ensuing definition is motivated by this imperative, and it is formulated to accommodate general Minkowski norms (defined at the beginning of Section~\ref{lower John}) in 
	$\mathbb R^n$  for potential applications in some other forthcoming research endeavors.

    \subsection{General Minkowski norm.}

In a recent manuscript \cite{SZ2024}, the authors presented an alternative proof of the seminal result obtained by Figalli, Maggi, and Pratelli \cite{FMP2010}, on the stability of isoperimetric inequality with respect to a general Minkowski norm, utilizing the John property of (almost) minimal surfaces. Partially motivated by this work, we consider John domains within the context of a general Minkowski norm in the first part of our manuscript.
 
    Some basic notations need to be clarified here. A function 
	$$\|\cdot\|\colon \mathbb R^n\to \mathbb R_+$$
	is a general Minkowski norm if it satisfies
	$$\|x+y\|\le \|x\| + \|y\|, \quad \forall x,\,y\in \mathbb R^n,$$
	$$\|\lambda x\|= \lambda \|x\|, \quad \forall x\in \mathbb R^n,\,\lambda>0,$$
	and
	$$\|x\|=0\  \text{ if and only if } \ x=0;  $$
	see e.g.\ \cite[Section 2.1]{ANP2002}. 
	Specifically, the standard Euclidean norm is denoted by $|\cdot| $. 
    Naturally, there exists a convex body
    $$\mathcal K_{\|\cdot\|}:=\{x\in\mathbb R^n:\|x\|< 1\}$$
   associated to $\|\cdot\|$.
   
   For a non-empty open set $\Omega\subsetneqq\mathbb{R}^n$ and $x\in\Omega$, we denote by $\partial\Omega$  the topological boundary of $\Omega$. We write $$d_{\Vert \cdot\Vert}(x,\partial \Omega):=\inf_{y\in \partial \Omega} \Vert x-y\Vert, $$
	and when the norm is the standard Euclidean one, we simply write
	$$d(x,\partial \Omega):=\inf_{y\in \partial\Omega}|x-y|.$$ 
	For $x\in\mathbb{R}^n$ and $r>0$, we use the notation
	$$B_{\Vert \cdot\Vert}(x,r):=\{y\in\mathbb{R}^n:\Vert x-y\Vert< r\}$$
	and by $\overline{B}_{\Vert\cdot\Vert}(x,r)$ its closure. 
	We drop the subindices and write 
	$B(x,r)$
	when the norm is the standard Euclidean norm. Especially, we denote the ball $B_{\|\cdot\|}(0,r)$ centered at $0$ by $B_{\|
    \cdot\|,r}$ for brevity.
    	
	Suppose that $(X,\Vert \cdot\Vert)$ is  a general Minkowski space and $\gamma\subset X$ is a rectifiable curve. Using reparametrization, $\gamma$ can be seen as a homeomorphism 
	$$\gamma:[0,1]\to X,\qquad t\mapsto \gamma(t).$$
	For every two distinct points $a_1,a_2\in \gamma$, there exists $t_1,t_2\in [0,1]$, such that $a_i=\gamma(t_i)$ for $i\in \{1,2\}$. We may assume $t_1<t_2$. Then we denote the subcurve $\gamma([t_1,t_2])$  joining $a_1$ to $a_2$ by $\gamma [a_1,a_2]$. Under the assumption above, 
	the length of the rectifiable curve $\gamma\subset X$ is written as 
	$$\ell_{\|\cdot\|}(\gamma)=\sup\left\{\sum_{i=0}^{N-1}\Vert \gamma(t_{i+1})-\gamma(t_{i})\Vert:0=a_0<a
	_1<a_2<\cdots<a_N=1, \quad N\in\mathbb{N}^+\right\}.$$ 
	If $\gamma$ is the union of curves, then $\ell_{\|\cdot\|}(\gamma)$ denotes the sum of the length of these curves under the same parametrization.

    \begin{defi}\label{def John}
        For a general Minkowski norm $\|\cdot\|$ and $J\ge 1$, a (bounded) domain $\Omega\subset \mathbb R^n$ is $J$-John  if there exists a distinguished point $x_0\in \Omega$ so that, for any $x\in \Omega$, one can find a curve $\gamma\subset \Omega$ joining $x$ to $x_0$ satisfying 
	$$\ell_{\|\cdot\|}(\gamma[x,\,y])\le J d_{\|\cdot\|}(y,\,\partial \Omega) \ \text{ for each } y\in \gamma.$$
    \end{defi}

Set
	\begin{equation}\label{non-balance}
		C_{\|\cdot\|}:=\max_{\|x\|=1}\|-x\|.
	\end{equation}
We emphasize here that the value of $C_{\|\cdot\|}$
  plays a crucial role in determining whether $\|\cdot\|$ constitutes a norm, as well as in influencing the length of the curve.
 \begin{rem} \label{para direction}
   When $C_{\|\cdot\|}=1$, $\|\cdot\|$ satisfies the properties of a norm. Conversely, if $C_{\|\cdot\|}\neq1$, the convex body $\mathcal K_{\|\cdot\|}$ associated to $\|\cdot\|$  loses its symmetry relative to the origin. In such instances, the length of curves becomes dependent on their parametrized direction. 
   
   A straightforward illustration is the case where, for some point $x_0\in \mathcal K_{\|\cdot\|}$ with $-x_0\notin \mathcal K_{\|\cdot\|}$. Then  the length of the line segment  parametrized from $0$ to $x_0$ is smaller than $1$, while the one in the reverse direction is  larger than $1$.
 \end{rem}

	\begin{defi}\label{def J}
		Consider the Euclidean space $(\mathbb R^n,\,\|\cdot\|)$ endowed with a general Minkowski norm  $\|\cdot\|$, and let $\Omega\subset \mathbb R^n$ be a (bounded) domain. Then  
		for any $x\in \Omega$ and  a curve $\gamma\subset\Omega$  containing $x$ and parametrized\footnote{We employ the standard abuse of notation here, using the same symbol for both the map and its image of a curve.} as $\gamma\colon [0,\,1]\to \Omega$, we define a function
		$j(\cdot;x,\gamma,\Omega):[0,1] \to \mathbb{R}$ as 

  $$j(t;x,\gamma,\Omega):=\frac{\ell_{\|\cdot\|}(\gamma([0,\,t]))}{d_{\Vert \cdot \Vert}(\gamma(t),\partial\Omega)}\quad \text{ for any } t\in [0,1].$$
		
		Subsequently, we set
		$$
		J(x,\Omega;x_0):=\inf_{\beta\subset \Omega}\left\{ \sup_{t\in [0,1]}j(t\,;x,\beta,\Omega):   \beta\subset\Omega  \text{  is a curve joining $x$ to $x_0\in \Omega$}  \right\}, 
		$$
		and 
		$$J(\Omega;x_0):=\sup_{x\in\Omega}J(x,\Omega;x_0). $$ 
		We say that 
		$\Omega$ satisfies the $J$-carrot John condition with center $x_0\in\Omega$ if 
		$$J=J(\Omega;x_0) <\infty.$$

		We define  $John(\cdot)$ on the collection of bounded domains of $\mathbb{R}^n$ as $$John(\Omega):=\inf_{x_0\in\Omega}\{J(\Omega;x_0)\},$$ 
		and designate $John(\Omega)$ as the 
		(optimal) John constant  of $\Omega$. 
	\end{defi}

	By the definition of $John(\cdot)$ and the definition of John domain, it follows that $\Omega\subset\mathbb{R}^n$ is a John domain with center $x_0\in \Omega$ if and only if 
	$John(\Omega)<+\infty.$
	
	In the pursuit of broader applications, we extend the definition of the $J$-carrot to a pair of suitable points 
	$x,\, x_0\in \dot {\mathbb R}^n$, where $\dot {\mathbb R}^n$ represents the one-point compactification of $\mathbb R^n$. 
	\begin{defi}\label{carrot def}
		Let $x\in \mathbb R^n$ and $x_0 \in   \dot{\mathbb{R}}^n$  be distinct points and $\gz\subset  {\mathbb{R}}^n$ be a curve joining $x$ toward $x_0$. Assume that  $J\ge 1$.  
		
		When $x_0\neq \infty$, we define
		$$car(\gz, J):=\bigcup\big\{B_{\Vert \cdot\Vert}(y,\ell_{\|\cdot\|}(\gamma[x,y])/J): y \in \gamma \setminus\{x\}\big\},$$
while when $x_0=\infty$, we define
		$$car(\gz, J):=\bigcup\big\{B_{\Vert \cdot\Vert}(y,\ell_{\|\cdot\|}(\gamma[x,y])/J): y \in \gamma \setminus\{x,\infty\}\big\}.$$
		Then the (open) set $car(\gamma, J)$ is called the $J$-$carrot$, with core $\gamma$ and vertex $x$, joining $x$ to $x_0$.

		We say an open set  $\Omega\subset {\mathbb R}^n$ satisfies $J$-carrot John condition  with center $x_0 \in \Omega\cup \{\infty\}$, if for each  point  $x\in \Omega$, there exists a curve $\beta\subset \Omega$ joining $x$ toward $x_0$ so that $car(\beta, J)\subset \Omega$. Furthermore, if $\Omega$ also satisfies connectivity, we say that $\Omega$ is a $J$-carrot John domain.
	\end{defi}

	\begin{rem}
		It is noteworthy that  in the definition of $car(\gamma,\,J)$, one has the flexibility to substitute $\ell_{\|\cdot\|}(\gamma[x,\,y])$
		by either $\diam_{\|\cdot\|}(\gamma[x,\,y])$ or simply $\|y-x\|$.
		Importantly, these alternative formulations are equivalent in both bounded and unbounded scenarios, as elucidated in, for instance, \cite[Theorem 2.14]{NV1991}.
	\end{rem}
	
	\begin{rem}\label{cigar defi}
		In the literature, an alternative definition of the John domain is sometimes employed, where the term ``$J$-carrot" is replaced by the so-called ``$J$-cigar". To elucidate, when considering a pair of distinct points $x,\,y\in \mathbb{R}^n$   and a curve   $\beta\subset\mathbb{R}^n$ containing $x$ and $y$, the ``$J$-cigar" is defined as:
		$$cig(\beta,J):=\bigcup\big\{B_{\Vert \cdot\Vert}(\eta,\rho(\eta)/J):\eta\in \beta\setminus\{x,y\}\big\},$$
		where
		$$\rho(\eta)=\min\{\ell_{\|\cdot\|}(\beta[x,\eta]),\ell_{\|\cdot\|}(\beta[y,\eta])\}.$$
		The set $cig(\beta,J)$ is called the $J$-cigar with core $\beta$ joining $x$ and $y$, and $\Omega$ is $J$-cigar John if each pair of points $x,\,y\in \Omega$ can be joined by  a curve  $\beta\subset \Omega$ satisfying  $cig(\beta,J)\subset \Omega$.
		Heuristically speaking, in the bounded case, one can interpret a $J$-cigar as the union of two $J$-carrots. Indeed, it has been rigorously established that when 		$\Omega\subset \mathbb R^n$		is bounded, these two definitions, employing either the $J$-carrot or the $J$-cigar, are equivalent; refer to, for example, \cite[Theorem 2.16]{NV1991}, and also  Lemma~\ref{cigar repla} in the manuscript. In addition, for a discussion of the unbounded case, see Remark 1.8.
	\end{rem}
	
	\subsection{Bounded John domains}	Now we are prepared to articulate our first theorem.
	
	\begin{thm}[lower-semicontinuity of (optimal) John constants]\label{low} Let $J_0\ge 2$ and assume that $\{\Omega_j\}_{j\in\mathbb{N}^+}$ is a sequence of uniformly bounded  John domains satisfying 
		$$John(\Omega_j)\le J_0 \ \text{ and } \ \vert \Omega_j\vert\ge c_0\vert B_{\Vert \cdot\Vert}(0,1)\vert,$$ for some $c_0> 0$. 
		Then up to passing to a subsequence, $\overline{\Omega}_j$ converges to some compact set $A\subset \mathbb R^n$ in the Hausdorff distance $d_H$ so that the interior $\Omega$ of $A$ satisfies 
		\begin{enumerate}
			\item [(i)]  $\max_{x\in\Omega}d_{\Vert \cdot\Vert}(x,\partial\Omega)\ge c=c(n,\, C_{\|\cdot\|},\,J_0,\,c_0)>0$, where $C_{\|\cdot\|}$ is defined in \eqref{non-balance}. 
			\item [(ii)]$\Omega $  is a  John domain with 
			$$John(\Omega)\le \liminf_{j\to \infty}{John(\Omega_j)}.$$
		\end{enumerate}	
	\end{thm}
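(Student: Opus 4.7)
The plan is to combine Blaschke selection for the uniformly bounded closures with an Arzel\`a--Ascoli extraction of the optimal John curves, and to read off the John bound for $\Omega$ from lower semicontinuity of length together with openness of the inner balls furnished by the John condition. After passing to a subsequence I may assume $\overline{\Omega}_j\to A$ in $d_H$ and $John(\Omega_j)\to J_\infty:=\liminf_j John(\Omega_j)$, and I fix near-optimal centres $x_j\in\Omega_j$ with $J(\Omega_j;x_j)\le John(\Omega_j)+1/j$ and, by compactness, $x_j\to x_0\in A$.

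Part (i) is essentially a volume comparison: evaluating the John inequality at $y=x_j$ gives $\|x-x_j\|\le \ell_{\|\cdot\|}(\beta)\le (J_0+1/j)\,d_{\|\cdot\|}(x_j,\partial\Omega_j)$ for every $x\in\Omega_j$, up to the asymmetry constant $C_{\|\cdot\|}$. Hence $\Omega_j$ sits in a $\|\cdot\|$-ball of radius comparable to $J_0\,d_{\|\cdot\|}(x_j,\partial\Omega_j)$ around $x_j$, and the lower bound $|\Omega_j|\ge c_0|B_{\|\cdot\|}(0,1)|$ forces $d_{\|\cdot\|}(x_j,\partial\Omega_j)\ge c(n,C_{\|\cdot\|},J_0,c_0)>0$. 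Passing the inner balls $B_{\|\cdot\|}(x_j,c)\subset\overline{\Omega}_j$ through the Hausdorff limit gives $B_{\|\cdot\|}(x_0,c)\subset A$, and openness of this ball upgrades the containment to $B_{\|\cdot\|}(x_0,c)\subset\Omega=\mathrm{int}(A)$, which settles (i) and places $x_0\in\Omega$.

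For (ii) I fix $x\in\Omega$, use Hausdorff convergence to produce $x^{(j)}\in\Omega_j$ with $x^{(j)}\to x$, and choose near-optimal John curves $\gamma_j\subset\Omega_j$ from $x^{(j)}$ to $x_j$. Their lengths are uniformly bounded by $(John(\Omega_j)+1/j)\,\diam_{\|\cdot\|}(\Omega_j)$, so after reparametrising by arclength on $[0,1]$ the $\gamma_j$ are uniformly Lipschitz; Arzel\`a--Ascoli furnishes a limit Lipschitz curve $\gamma\colon[0,1]\to A$ from $x$ to $x_0$. For each $y_j\in\gamma_j$ the John condition yields an inner ball of radius $r_j:=d_{\|\cdot\|}(y_j,\partial\Omega_j)\ge \ell_{\|\cdot\|}(\gamma_j[x^{(j)},y_j])/(John(\Omega_j)+1/j)$; transferring this ball through $d_H$-convergence and again invoking openness, I obtain $d_{\|\cdot\|}(y,\partial\Omega)\ge \liminf_j r_j\ge \ell_{\|\cdot\|}(\gamma[x,y])/J_\infty$, the last inequality being lower semicontinuity of length under uniform convergence. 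This is precisely the John inequality on $\Omega$ with centre $x_0$ and constant $J_\infty$, so $\gamma\subset\Omega$, $\Omega$ is path-connected, and $John(\Omega)\le J(\Omega;x_0)\le J_\infty$.

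The hard part will be this final limit step: I need both that the inner balls $B_{\|\cdot\|}(y_j,r_j)\subset\Omega_j$ survive the Hausdorff limit with the correct radius (where openness of a ball is what upgrades a containment in $A$ to one in $\Omega$) and that the direction of semicontinuity of length is compatible with bounding $\ell/d$ from above. A subtlety to watch is that $r_j\to 0$ as $y_j\to x^{(j)}$, so the inner-ball argument only controls $\gamma$ on the interior of the parameter interval; the endpoints $x$ and $x_0$ already lie in $\Omega$ by construction, which closes the loop.
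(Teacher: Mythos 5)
Your argument is correct, and its core mechanism -- Blaschke/Hausdorff compactness, an Arzel\'a--Ascoli limit of John curves parametrized proportionally to arclength, and transferring the inner balls $B_{\Vert\cdot\Vert}(y_j,r_j)\subset\Omega_j$ through the Hausdorff limit with openness upgrading the containment from $A$ to $\Omega=\mathrm{int}(A)$ -- is exactly what the paper packages into Lemma~\ref{carrot convergent} and Lemma~\ref{keep distance} (your ball-transfer step implicitly uses $A=\bigcap_m Cl(\bigcup_{j\ge m}\overline{\Omega}_j)$, which the paper quotes in the proof of Lemma~\ref{keep distance}). Where you genuinely diverge is in the supporting structure: the paper first establishes attainment of the infima (Lemma~\ref{J eqiup}, resting on the local Lipschitz continuity of $J(\cdot,\Omega;\cdot)$ in Lemma~\ref{continuous 1} and the attainment-away-from-the-boundary statement Lemma~\ref{continuous 2}) and invokes V\"ais\"al\"a's bound at the deepest point (Lemma~\ref{4c^2}); its Step 1 then runs the volume comparison at $x_{\Omega_j}$ and its Step 2 uses Lemma~\ref{continuous 2} to keep the exact optimal centers uniformly interior. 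You replace all of this by near-optimal centers $x_j$ with $J(\Omega_j;x_j)\le John(\Omega_j)+1/j$ and near-optimal curves, and you run the volume comparison directly at $x_j$: evaluating the John inequality at the endpoint gives $\Omega_j\subset \overline{B}_{\Vert\cdot\Vert}(x_j,(J_0+2)\,d_{\Vert\cdot\Vert}(x_j,\partial\Omega_j))$, so the volume lower bound yields $d_{\Vert\cdot\Vert}(x_j,\partial\Omega_j)\ge c(n,J_0,c_0)$, settling (i) and the uniform interiority of the centers in one stroke, with no continuity or attainment lemmas and no appeal to \cite{V1994}. This works because the infimum in Definition~\ref{def J} makes near-optimal choices sufficient, and the $1/j$ errors are absorbed by the lower semicontinuity of length and the $\liminf$ in the ball-transfer step; the paper's heavier machinery is not needed for Theorem~\ref{low} itself but is reused elsewhere (e.g.\ Lemma~\ref{continuous 2} and Lemma~\ref{J eqiup} reappear in the Corollary identifying minimizers as translates of $D$). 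If you write this up, two small points deserve a line each: the limit curve need not be injective (this also occurs in the paper's own limit; either allow such curves or pass to an injective subarc, which does not increase the ratio $\ell/d$), and your endpoint remark correctly handles the degeneration $r_j\to 0$ near $x$, since points $y\in\gamma$ with $\ell_{\Vert\cdot\Vert}(\gamma[x,y])>0$ acquire a definite inner ball while the endpoints already lie in $\Omega$.
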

	
	In Theorem~\ref{low}, one can only anticipate lower semicontinuity, not continuity. To illustrate, consider the sequence of sets
	$$\Omega_k:=\mathbb D\setminus [0,\,1]\times [-2^{-k},\,2^{-k}], \quad k\ge 1$$
	where $\mathbb D$ denotes the unit disk in the plane. Then $John(\Omega_k)$ is uniformly bounded below, away from  $1$, while the limit of $\overline{\Omega}_k$ is $\overline{\mathbb D}$ as $k\to \infty$, whose interior has an (optimal) John constant of 
	$1$.

	\begin{cor}
		For  $R\gg |D|$ satisfying that $D\subset B_{\|\cdot\|,\,R}$, 
		$$\min\left\{John(\Omega)\colon |\Omega|= |D|,\  \Omega\subset B_{\|\cdot\|,\,R}\right\} $$
		has a solution, where $D=-\mathcal{K}_{\|\cdot\|}$. 
		Moreover, the set of minimizers precisely consists of translations of	$D$. 
	\end{cor}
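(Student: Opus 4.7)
The plan is to identify the minimum of the John constant in the admissible class with $1$ and to show it is realised precisely by translates of $D$. First, I would check $John(D)=1$: taking $x_0=0$ and the segment $[0,x]$ from $0$ to any $x\in D$, a direct computation gives $j(tx;x,[0,x],D)=(1-t)\|x\|/(1-t\|x\|)$, which is decreasing in $t\in[0,1]$ and hence maximised at $t=0$ with value $\|x\|<1$; so $J(D;0)\le 1$ and $John(D)\le 1$. Since $R$ is large enough, any suitable translate $p+D\subset B_{\|\cdot\|,R}$ is an admissible competitor with $John(p+D)\le 1$, which will take care of existence once the matching lower bound is in place.

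The heart of the argument is to show that any admissible $\Omega$ with $John(\Omega)\le 1$ must be a translate of $D$. Let $\Omega$ be such a domain. Unpacking the infimum defining $John$, I would pick $x_0^n\in\Omega$ with $J(\Omega;x_0^n)\le 1+1/n$; since $\Omega\subset B_{\|\cdot\|,R}$ is bounded, extract a subsequence with $x_0^n\to x_0^{\ast}\in\overline{\Omega}$. For each $x\in\Omega$, select a curve $\gamma_n\subset\Omega$ joining $x$ to $x_0^n$ with $\sup_{y}j(y;x,\gamma_n,\Omega)\le 1+2/n$. Evaluating at $y=x_0^n$ and using $\|x_0^n-x\|\le \ell_{\|\cdot\|}(\gamma_n)$ yields
$$\|x_0^n-x\|\le (1+2/n)\,d_{\|\cdot\|}(x_0^n,\partial\Omega).$$
Letting $n\to\infty$ and invoking the $1$-Lipschitz continuity of $d_{\|\cdot\|}(\cdot,\partial\Omega)$, I would deduce that $\|x_0^{\ast}-x\|\le r:=d_{\|\cdot\|}(x_0^{\ast},\partial\Omega)$ for every $x\in\Omega$.

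Combined with $B_{\|\cdot\|}(x_0^{\ast},r)\subset\Omega$ (by the definition of $r$), this produces $B_{\|\cdot\|}(x_0^{\ast},r)\subset\Omega\subset\overline{B}_{\|\cdot\|}(x_0^{\ast},r)$; in particular $r>0$ (else $|\Omega|=0$), so $x_0^{\ast}\in\Omega$. Since the boundary of a John domain has Lebesgue measure zero, $|\Omega|=|B_{\|\cdot\|}(x_0^{\ast},r)|=r^n|D|$, and comparison with $|\Omega|=|D|$ forces $r=1$. Openness of $\Omega$ then prevents any point of $\partial B_{\|\cdot\|}(x_0^{\ast},1)$ from lying in $\Omega$, so $\Omega=B_{\|\cdot\|}(x_0^{\ast},1)=x_0^{\ast}+D$. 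Consequently the minimum equals $John(D)=1$, is attained (e.g.\ by $D$ itself), and its minimizers are exactly the translates of $D$ that fit inside $B_{\|\cdot\|,R}$.

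The only delicate point I anticipate is the extraction of the limiting centre $x_0^{\ast}$ and the verification $r>0$; everything else reduces to volume bookkeeping and the fact that a John domain has negligible boundary. In the asymmetric Minkowski setting, minor care is needed with the order of arguments in $\|\cdot\|$ and with the convention for $B_{\|\cdot\|}$, but as noted in the remark following Definition \ref{carrot def} this does not affect the scheme.
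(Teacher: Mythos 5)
Your proposal is correct in substance, but it takes a genuinely different route from the paper. The paper treats existence through the machinery it has just built: it takes a minimizing sequence, applies Theorem~\ref{low} to extract a Hausdorff limit with $John(\Omega)\le\liminf_k John(\Omega_k)$, and invokes V\"ais\"al\"a's volume continuity \cite{V2000} to check that the limit still satisfies $|\Omega|=|D|$; only afterwards does it prove rigidity, using Lemma~\ref{continuous 2} to place an exactly optimal center in the interior of $\Omega$ and Lemma~\ref{J eqiup} to produce an exactly optimal John curve. You instead prove rigidity first and directly: near-optimal centers $x_0^n$ and near-optimal curves, with $j$ evaluated at the endpoint, give $\Omega\subset\overline{B}_{\|\cdot\|}(x_0^{\ast},r)$, and the sandwich with the inscribed ball $B_{\|\cdot\|}(x_0^{\ast},r)\subset\Omega$ plus the volume constraint forces $\Omega$ to be a translate of $D$. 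Existence then comes for free, since a translate of $D$ is itself admissible with $John(D)\le 1$ and, by rigidity, no competitor can do better than translates of $D$; this bypasses Theorem~\ref{low}, the volume-continuity result of \cite{V2000}, and Lemmas~\ref{J eqiup} and~\ref{continuous 2} entirely (your limit along near-minimizers replaces the attainment statements those lemmas provide). What each approach buys: yours is more elementary and self-contained, while the paper's route illustrates Theorem~\ref{low} and would still function in shape-optimization problems where the optimal shape is not known explicitly in advance. Two small points to tidy: to pin the minimum at exactly $1$ you should also record $John(D)\ge 1$ (evaluate $j$ at the endpoint, exactly as in your rigidity step; the paper likewise asserts $John(\Omega)\ge1$ without proof) --- although even without the exact value your rigidity argument already yields attainment and the characterization of minimizers; and the orientation of differences, $\|x_0-y\|$ versus $\|y-x_0\|$, must be kept consistent for an asymmetric Minkowski norm, a bookkeeping issue on which the paper's own proof is equally loose.
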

	\begin{proof}
		Let $\Omega_k$ be a minimizing sequence. 
		As $\partial \Omega_k$ has Lebesgue measure $0$, then 
		$$|\Omega_k|=|\overline{\Omega}_k|.$$
		Then as a direct consequence of Theorem~\ref{low}, up to passing to a subsequence, $\overline{\Omega}_k\to \overline{\Omega}$ for some open set $\Omega\subset  B_{\|\cdot\|,\,R}$, together with 
		$$John(\Omega)\le \liminf_{k\to \infty}{John(\Omega_k)}\le J$$
		for some $J\ge 1$.
		Moreover, by \cite[Theorem 2.8]{V2000}\footnote{Even though in \cite[Theorem 2.8]{V2000} it is only proved that for a $J$-carrot John domain $U\subset \mathbb R^n$ with $diam(U)\le 1$,  
			$$|\{x\in U:d(x,\partial U)<t\}|\le \mu(t,J,n)\to 0\qquad\text{ as }t\to 0.$$
			However, it follows from a similar argument that for a bounded $J$-carrot John domain $U\subset \mathbb R^n$ with $|U|\le M$, where $M$ is a positive constant,  
			$$|\{x\in \mathbb R^n: d(x,\partial U)<t\}|\le \mu(t,J,n,M)\to 0\qquad\text{ as }t\to 0.$$ 
			This, coupled with the fact that $\overline{\Omega}_k$ forms a Cauchy sequence in terms of the Hausdorff distance, leads us to the desired conclusion.}, Lebesgue measure is  continuous with respect to the Hausdorff metric for $J$-carrot John domains. 
		Thus $\Omega$ is a desired minimizer.

		Now we show that a minimizer $\Omega$ must be a translation of $D$. Indeed, since $John(\Omega)\ge 1$ and $John(D)=1$, then it follows that $John(\Omega)= 1$. Now by the definition of $John(\Omega)$ and Lemma~\ref{continuous 2}, saying that the infimum of $x_0$ is taken away from the boundary, we conclude that for any $y\in \Omega$
		$$\|y-x_0\|\le \ell_{\|\cdot\|}(\gamma_{y,\,x_0})\le d_{\|\cdot\|}(x_0,\,\partial\Omega),$$
		where $\gamma_{y,\,x_0}$ is a  John curve joining $y$ to $x_0$ given  by Lemma~\ref{J eqiup}.  
		Thus $\Omega$ is  a translation of $D$.
	\end{proof}

	We expect that this outcome is intricately connected to the observation that ``balls have the worst best Sobolev inequalities." 	
	In contrast, it was proven that a Jordan domain $\Omega\subset \mathbb R^2$ qualifies as a quasidisk if and only if both $\Omega$ 
	and its complementary domain are John domains, as documented in \cite{NV1991} and \cite[Theorem 6.12]{GH2012}. Consequently, considering the role of normalized quasidisks in modeling the universal Teichm\"uller space \cite[Section III.1.5]{L1987}, Theorem~\ref{low} not only enables the exploration of extremal maps in quasiconformal mappings but also offers insights into the properties of quasidisks.


	\subsection{Unbounded open sets satisfying carrot John condition with center $\infty$}

	Advancing in our exploration, we turn our attention to the $J$-carrot John condition for unbounded domains $\Omega\subset \mathbb R^n$ (with unbounded $\partial \Omega$). Namely, for any $x\in \Omega$, there exists a curve $\gamma\subset \Omega$ from $x$ toward $\infty$  in such a way that the infinite $J$-carrot 
	$$car(\gamma,\,J)\subset \Omega.$$ 
	Such domains find relevance in the exploration of the Mumford-Shah problem, as expounded in, for instance, \cite[Section 19]{BD2001} and \cite[Section 56, Proposition 7]{D2006}. Also see \cite{Z2025} for the application of (a local version of) the following theorem.

	\begin{thm}\label{John component}
		
		Suppose that  $K\subset \mathbb R^n$ is a closed set, $\mathbb{R}^n\setminus K $ is an unbounded open set  satisfying the $J$-carrot John condition with  center $\infty$ and $0\in K$. Then for any 
		$R\ge 0$, there exist  at most $N$-many $J'$-carrot John subdomains (where some of them could be empty) $\{W_{j,R}\}_{j\in \{1,\cdots,N\}}$ of $\mathbb{R}^n\setminus K$ of $\mathbb{R}^n$ with $J'=J'(n,\, J)$ and $N=N(n,\, J)$, such that
		
		\begin{enumerate}[(i)]
			
			\item We have 
			$$B_{R}\setminus K\subset   \bigcup_{j=1}^N  \overline{W}_{j,\,R}, $$
			together with $W_{j,\,R}\subset B_{C'R}$ with $C'=C'(n,\,J)$ and (if it is non-empty)
			\begin{equation}\label{volumn relat}
				C(n,\, J)^{-1}R^n\le  |W_{j,R}|\le C(n,\, J)R^n.
			\end{equation}
			
			In addition, for each $1\le k\le N$ and $R>0$, there exists a sequence $\{k_l\}_{l=0}^{+\infty}$ and $k_0=k$ so that
			\begin{equation}\label{volumn relat 2}
				C(n,\, J)^{-1}|W_{k_{l},2^{l}R}|\le  |W_{k_{l},2^{l}R}\cap W_{k_{l+1},2^{l+1} R}|, \quad l\ge 0.
			\end{equation}

			\item 
			For $1\le j\le N$ and some $x_j\in \mathbb R^n,$ the set
			$$W_{j,\infty}:=\bigcup_{R>|x_j|} W_{j,R}\subset \mathbb{R}^n\setminus K$$
			is also a $J'$-carrot John subdomain centered at $\infty$, for which 
			\begin{equation}\label{whole cover}
				\mathbb{R}^n\setminus K\subset\bigcup_{j=1}^{N}\overline{W}_{j,\infty}. 
			\end{equation}
			Moreover, for any $z,w\in W_{j,\infty}$, there exists a ball $B_{z,w}\subset W_{j,\infty}$ whose  radius is $r_{z,w}$ so that there are two rectifiable curves
 $\gamma_z,\gamma_w$ respectively joining $z,w $ to the center $a_{z,w}$ of $B_{z,w}$ satisfying 
			\begin{equation}\label{Boman chain}
				B_{z,w}\subset car(\gamma_z,J')\subset W_{j,\infty}\quad\text{and}\quad  B_{z,w}\subset car(\gamma_w,J')\subset W_{j,\infty},
			\end{equation}
			where  the radius $r_{z,w}$ satisfies
			\begin{equation}\label{cigar ball}
				\frac{\ell(\gamma_z[z,a_{z,w}])}{J'}=r_{z,w}=\frac{\ell(\gamma_w[w,a_{z,w}])}{J'}.
			\end{equation}

			\item 	In particular, as a consequence of \cite{B1988}, \cite{GR1983} (for bounded domains) together with \cite{H1992} (for unbounded domains), we have
			$$\inf_c \left(\int_{W_{j,\,R}} |u-c|^{p^*}\, dx\right)^{\frac 1 {p^*}}\le C(n,\,p,\,J) \left(\int_{W_{j,\,R}} |Du|^{p}\,dx \right)^{\frac 1 {p}} \  \text{ for any $u\in W^{1,\,p}(W_{j,\,R})$},$$
			and
			$$\inf_c \left(\int_{W_{j,\,\infty}} |u-c|^{p^*}\, dx\right)^{\frac 1 {p^*}}\le C(n,\,p,\,J) \left(\int_{W_{j,\,\infty}} |Du|^{p}\,dx \right)^{\frac 1 {p}}\ \text{ for any $u\in W^{1,\,p}(W_{j,\,\infty})$}.$$
		\end{enumerate}	
	\end{thm}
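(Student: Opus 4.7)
The plan is to identify a uniformly bounded number of escape corridors at each scale $R$, attach each point $x \in B_R \setminus K$ to one of them via its $J$-carrot curve toward $\infty$, and read the conclusions of the theorem off the resulting families.

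\emph{Channels and definition of $W_{j, R}$.} Fix constants $c_0 = c_0(n, J)$ small and $C_0 = C_0(n, J)$ large (tuned so that $C_0/J \gg c_0$ below). Any $J$-carrot from $x \in B_R \setminus K$ to $\infty$ must cross $\partial B_{C_0 R}$ at some point $y^*$, and by the carrot property the ball $B(y^*, \ell/J)$ with $\ell \ge (C_0 - 1) R$ lies in $\mathbb R^n \setminus K$; hence $y^*$ belongs to
$$E_R := \{ y \in \partial B_{C_0 R} : d(y, K) \ge c_0 R \}.$$
A maximal $c_0 R$-separated subset $\{ y_{j, R}\}_{j=1}^{N_R}$ of $E_R$ satisfies $N_R \le N = N(n, J)$ by a standard packing count in the annulus $B_{2 C_0 R} \setminus B_{C_0 R/2}$; pad with empty slots up to $N$. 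Let $\mathcal F_{j, R}$ be the family of carrot curves of points $x \in B_R \setminus K$ to $\infty$ truncated at $\partial B_{C_0 R}$, whose exit points lie in $B(y_{j, R}, c_0 R)$, and set
$$W_{j, R} := \bigcup_{\beta \in \mathcal F_{j, R}} car(\beta, J).$$
Every $x \in B_R \setminus K$ is covered because its exit sits within $c_0 R$ of some $y_{j, R}$. Extending each such $\beta$ by the straight segment from its exit to $y_{j, R}$ (which lies in $\mathbb R^n \setminus K$ because $y_{j, R} \in E_R$) and re-examining the ratio length/distance yields the $J' = J'(n, J)$-carrot John structure of $W_{j, R}$ centered at $y_{j, R}$. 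The inclusion $W_{j, R} \subset B_{C' R}$ and the two-sided bound \eqref{volumn relat} are immediate, the lower bound coming from $B(y_{j, R}, c_0 R / 2) \subset W_{j, R}$ via the carrot ball at $y^*$.

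\emph{Chain across scales and the unbounded union.} For \eqref{volumn relat 2}, follow the carrot of a representative point of $W_{k, R}$ outward to its crossing $z^*$ of $\partial B_{2 C_0 R}$: by the same argument, $z^*$ lies within $c_0 \cdot 2 R$ of some $y_{k_1, 2 R}$, and the carrot ball at $z^*$ contributes a definite fraction of both $|W_{k, R}|$ and $|W_{k_1, 2 R}|$ to their intersection. Iterating dyadically produces the sequence $\{k_l\}_{l \ge 0}$ with $k_0 = k$. Setting $W_{j, \infty} := \bigcup_{l \ge 0} W_{k_l, 2^l R_0}$ for a starting scale $R_0$ and base point $x_j$, the chain overlap makes $W_{j, \infty}$ connected, and the individual carrot data patch into a $J'$-carrot John structure with center $\infty$. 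The Boman-type ball system \eqref{Boman chain}--\eqref{cigar ball} is produced by joining $z, w \in W_{j, \infty}$ upward along the chain until both of their carrots enter a common $W_{k_l, 2^l R_0}$, and selecting $B_{z, w}$ inside the resulting overlap region, with the radius relation read off from the carrot length condition. Part (iii) is then a direct application of the Sobolev--Poincar\'e inequalities of \cite{B1988, GR1983, H1992} on the bounded and unbounded John domains just constructed.

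\emph{Main obstacle.} The delicate point is the \emph{consistency} of the labels across scales: the nets $\{y_{j, R}\}_R$ are independent a priori, so the sequence $\{k_l\}$ and the total cover \eqref{whole cover} must be extracted by a dyadic relabeling or a tree argument originating from a fixed coarsest scale, while ensuring that the overlap fraction in \eqref{volumn relat 2} does not deteriorate as $l \to \infty$. This forces $c_0$ and $C_0$, and hence $J'$ and $N$, to be tuned once and for all in terms of $n$ and $J$; any scale-dependent loss in the packing or the carrot extension step would break the uniform bound on $N$ or the connectedness of $W_{j, \infty}$.
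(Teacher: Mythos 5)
Your per--scale construction is essentially a variant of the paper's own first step: your net points $y_{j,R}$ on $\partial B_{C_0R}$ play the role of the Besicovitch balls $\mathcal{B}_{j,R}$ centered on $\partial B_{3R}$, and with the constants tuned the union of truncated carrots sharing a common exit ball is indeed a connected $J'$-carrot John set of volume comparable to $R^n$ inside $B_{C'R}$ (the step you compress into ``re-examining the ratio'' is exactly Proposition~\ref{length car pro}, and one must also check that the carrots along your connecting segments stay inside the big exit ball, which your tuning of $c_0,C_0$ permits). The dyadic overlap idea behind \eqref{volumn relat 2} likewise matches the paper's, provided you fix once and for all a single John curve $\gamma_x$ per point and use that same curve at every scale, so that the carrot truncated at scale $2^lC_0R$ sits in both consecutive sets.

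The genuine gap is precisely the point you yourself defer as the ``main obstacle'': the cross--scale assembly, which is the heart of the theorem. The statement requires one labelling $j=1,\dots,N$ valid at all scales, with $W_{j,\infty}=\bigcup_{R>|x_j|}W_{j,R}$ itself a $J'$-carrot John domain with center $\infty$, the global cover \eqref{whole cover}, and the balls of \eqref{Boman chain}--\eqref{cigar ball}. Your chain $W_{j,\infty}:=\bigcup_l W_{k_l,2^lR_0}$ does not deliver this as written. First, the relabelling/tree argument that bounds the number of threads by $N(n,J)$, keeps the labels consistent with the scale-$R$ cover in (i), and yields \eqref{whole cover} is only announced, not carried out; this is the inductive Case 1/Case 2 construction occupying Steps 2--3 of the paper's proof, where some families are deliberately stopped at a finite scale. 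Second, ``the individual carrot data patch into a $J'$-carrot John structure with center $\infty$'' is not automatic: a point $z\in W_{k_l,2^lR_0}$ lies in the carrot of some member curve $\gamma_x$, but the tail of $car(\gamma_x,J)$ beyond scale $2^lC_0R_0$ lands in whichever threads $\gamma_x$ happens to select, which need not be your chain $\{k_{l'}\}$, so $z$ has no evident escape route to $\infty$ whose carrot stays inside $W_{j,\infty}$. The paper resolves this by anchoring every scale of a family on one fixed core curve $\gamma_{x_j}$ to infinity: Proposition~\ref{omega cons} joins every point of $W_{j,R}$ to the point $(x_j)_R$ on that core with length at most $C_1R$, and the gluing then rests on the quantitative comparison \eqref{relate5} fed into Lemma~\ref{Wj John}; moreover the exact radius identity \eqref{cigar ball} requires the balancing argument of Lemma~\ref{cigar repla}, not merely choosing a ball in an overlap. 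Without a common core curve (or an equivalent device) and these two lemmas, the John property of $W_{j,\infty}$ at $\infty$, the uniform bound on $N$, and \eqref{Boman chain}--\eqref{cigar ball} remain unproved.
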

	
	\begin{rem}
		As noted in Remark \ref{cigar defi}, according to \cite[Theorem 2.16]{NV1991}, the $J$-carrot John condition and the $J$-cigar John condition are equivalent for any bounded domain, up to positive constants.
		
		However, this equivalence does not necessarily hold for unbounded domains, and the Sobolev-Poincar\'e inequality  in  \cite{H1992} is proven for unbounded cigar John domains.
		An example for the failure of the equivalence is given by the following: 	
		Consider the unbounded domain
		$$U=\mathbb{R}^2\setminus \big((-\infty,-1]\times\{0\}\cup[1,+\infty)\times\{0\}\big)$$
		which satisfies the $1$-carrot John condition with center $\infty$. However, it does not satisfy any cigar John condition. Nevertheless, observe that,   $U$ can be covered as the union of two sets $\mathbb{H}^+\cup B(0,1)$ and $\mathbb{H}^-$, where $\mathbb H^{\pm}$ denote the upper/lower (open) half plane, and  each of them individually satisfies the $2$-cigar John condition. 
		
		In a similar vein, Theorem~\ref{John component} establishes that any unbounded $J$-carrot John domain can be covered by a uniformly finite number of $J'$-cigar John domains, where the number of domains is uniformly bounded depending only on $J$ and $n$. 
		We remark that \eqref{Boman chain} is indeed equivalent to stating that every two distinct points $z,\,w$ can be connected by  a $J'$-cigar inside $W_{j,\,\infty}$.  However, to streamline terminology, the theorem is presented in the context of carrots.
	\end{rem}

	The manuscript is structured as follows: In Section~\ref{lower John}, we provide the proof of Theorem~\ref{low}, devoting careful attention to the continuity of functions as defined in Definition~\ref{def J}. A pivotal lemma, namely Lemma~\ref{carrot convergent}, examines the behavior of carrots under Hausdorff convergence. Another crucial aspect involves preventing the John centers of converging John domains from reaching the boundary, a concern addressed in Lemma~\ref{continuous 2}.
	
	The proof of Theorem~\ref{John component} is detailed in Section~\ref{unbdd carrot John}, with an introductory overview of the proof presented at the outset of the section.
	
	

	\section{Lower-semicontinuity of John constant}\label{lower John}
	In our manuscript, we employ the notation as follows: For any set $E\subset\mathbb{R}^n$, the closure of $E$ with respect to the Euclidean topology is denoted as $\overline{E}$ or $Cl(E)$, and its complement is denoted by $E^c$. Given that the Euclidean space is of finite dimension, the topology induced by the norms remains the same.

	The space consisting of all nonempty compact sets in $\mathbb{R}^n$ equipped with the Hausdorff metric $d_H$ is denoted as $(\mathcal{C}^n, d_H)$. The topologies of $(\mathcal{C}^n, d_H)$ induced by all norms in $\mathbb R^n$ are equivalent. For simplicity, one can think of   $d_H$ as the metric induced by the standard Euclidean norm in $\mathbb R^n$.

	The Lebesgue measure of the set $E\subset\mathbb{R}^n$ is denoted by $\vert E\vert$ and the $s$-dimension Hausdorff measure of $E$ is denoted by $\mathcal{H}^{s}(E)$. A general constant is denoted by 
	$C$, which may vary across different estimates, and we include all the constants it depends on within the parentheses, denoted as $C(\cdot)$.
	


	We next prove a key lemma, which later helps us to deduce the lower-semicontinuity of both the function $J(\Omega;\cdot):\Omega\to [1,+\infty)$ and the (optimal) John constant $John(\cdot)$. 
	\begin{lem}\label{carrot convergent}
		Let $\{x_i\}_{i\in\mathbb N}$ and $\{y_i\}_{i\in\mathbb N}$ be two sequences of points with $x_i\in\mathbb R^n$ and $y_i\in\dot{\mathbb R}^n$ for $i\in\mathbb N$. Assume that $\{\gamma_i\}_{i\in\mathbb{N}}$ is a sequence of locally rectifiable curves   in ${\mathbb{R}}^n$ joining pairs of distinct points $x_i,\,y_i$. 
		$$\lim_{i\to+\infty}x_i=:x\neq\infty \ \ \text{ and } \ \ y:=\lim_{i\to+\infty}y_i$$
		exist in $\dot{\mathbb R}^n$ and that		 either
		$$ \ell_{\|\cdot\|}(\gamma_i) \text{ is uniformly bounded,}$$
		or
		\begin{equation}\label{uniform finite length}
			y=\infty \quad \text{ and } \quad \ell_{\|\cdot\|}(\gamma_i\cap B_R),\ R\ge 1  \ \text{ uniformly bounded independent of $i$} 
		\end{equation}
		holds. 
		
		Moreover,  let $\{J_i\}_{i\in\mathbb{N}}$ be a uniformly bounded sequence with $J\ge 1$ and $car(\gamma_i,J_i)$ is the corresponding $J_i$-carrot joining $x_i$ toward $y_i$, respectively. Then up to relabeling the sequence,
		\item [(i)] In $(\mathbb R^n,\|\cdot\|)$
		\begin{equation}\label{hausdorff con}
			\gamma_i \to \gamma \quad \text{ locally uniformly, }
		\end{equation}
		and
		\begin{equation}\label{car lemm}
			car( \gamma,J)\subset \bigcap_{m=1}^{+\infty}{\bigcup_{i=m}^{+\infty}car(\gamma_i,J_i)},
		\end{equation}
		where $J:=\liminf_{i\to +\infty}J_i$. 
		\item [(ii)] 
		If $\ell_{\|\cdot\|}(\gamma_i)$ is uniformly bounded, then
		\begin{equation}\label{norm}
			\ell_{\|\cdot\|}( \gamma)\le \liminf_{i\to+\infty}\ell_{\|\cdot\|}(\gamma_i).
		\end{equation}
		
	\end{lem}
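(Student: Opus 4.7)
The plan is to parametrize each $\gamma_i$ by $\|\cdot\|$-arc length from $x_i$, viewing it as a $1$-Lipschitz map $\gamma_i\colon[0,L_i]\to(\mathbb{R}^n,\|\cdot\|)$ with $L_i=\ell_{\|\cdot\|}(\gamma_i)$, which by equivalence of norms on $\mathbb R^n$ is uniformly Lipschitz in the Euclidean metric with a constant depending only on $C_{\|\cdot\|}$. In the uniformly bounded case $L_i\le M$, I would extend each $\gamma_i$ constantly beyond $L_i$ to the whole interval $[0,M]$ and then, after passing to a subsequence along which $L_i\to L_\infty:=\liminf_i L_i$ and $J_i\to J:=\liminf_i J_i$, apply Arzel\`a--Ascoli to the equi-Lipschitz, uniformly bounded family to obtain uniform convergence $\gamma_i\to\gamma$ on $[0,M]$, with $\gamma$ constant on $[L_\infty,M]$ (so the actual limit curve is $\gamma|_{[0,L_\infty]}$ joining $x$ to $y$). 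In the alternative case $y=\infty$, the trivial-partition bound $\|y_i-x_i\|\le L_i$ combined with $\|y_i-x_i\|\to\infty$ forces $L_i\to\infty$, so each restriction $\gamma_i|_{[0,T]}$ is eventually defined and trapped in $\overline{B}_{\|\cdot\|}(x,T+1)$; a diagonal extraction in $T$ then yields a locally uniform limit $\gamma\colon[0,\infty)\to\mathbb{R}^n$, again arranging $J_i\to J$ along the way.

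For the carrot inclusion \eqref{car lemm}, I would fix $z\in car(\gamma,J)$, so that $\|z-y^*\|<\ell_{\|\cdot\|}(\gamma[x,y^*])/J=t^*/J$ for some $y^*=\gamma(t^*)$ with $t^*>0$. Because the inequality is strict, a small shrinkage $t^*\mapsto t^*-\delta$ reduces matters to the case where $t^*$ lies strictly inside the parameter interval of every $\gamma_i$ for $i$ large. Then $y_i^*:=\gamma_i(t^*)$ is well-defined, arc-length parametrization forces $\ell_{\|\cdot\|}(\gamma_i[x_i,y_i^*])=t^*=\ell_{\|\cdot\|}(\gamma[x,y^*])$, and locally uniform convergence together with $J_i\to J$ yields $\|z-y_i^*\|<t^*/J_i$ for all large $i$, which is exactly $z\in car(\gamma_i,J_i)$; relabeling the subsequence proves \eqref{car lemm}. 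Part (ii) then follows from the partition-supremum characterization of length: for any partition $0=t_0<\cdots<t_N\le L_\infty$, uniform convergence at each node gives $\sum_j\|\gamma(t_j)-\gamma(t_{j-1})\|=\lim_i\sum_j\|\gamma_i(t_j)-\gamma_i(t_{j-1})\|\le\liminf_i\ell_{\|\cdot\|}(\gamma_i)$, and taking the supremum over partitions delivers \eqref{norm}.

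The main obstacle I anticipate is the bookkeeping in the unbounded case, where the limit curve $\gamma$ need not itself reach the vertex $y=\infty$ and the parameter intervals $[0,L_i]$ vary with $i$; the strict-inequality shrinking of $t^*$ above is the device that decouples the carrot inclusion from the exact endpoint behaviour of $\gamma$ and keeps the Arzel\`a--Ascoli extraction compatible across compact windows.
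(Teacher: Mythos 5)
Your proposal is correct and follows essentially the same route as the paper: arc-length parametrization with constant extension and Arzel\`a--Ascoli (plus a diagonal extraction over compact windows when $y=\infty$), extraction of a subsequence along which $J_i$ and the lengths converge, the strict-inequality slack argument transferring each carrot ball of $\gamma$ to the carrots of $\gamma_i$ for large $i$, and lower semicontinuity of length from the $1$-Lipschitz/partition characterization. The only blemish is the asserted identity $\ell_{\|\cdot\|}(\gamma[x,y^*])=t^*$, which can fail because the locally uniform limit $\gamma$ need not be parametrized by arc length; however your argument only needs $\ell_{\|\cdot\|}(\gamma[x,y^*])\le t^*=\ell_{\|\cdot\|}(\gamma_i[x_i,y_i^*])$, which holds since $\gamma$ is $1$-Lipschitz, so the proof is unaffected.
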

	
	\begin{proof}
		\noindent {\bf Case 1:   $\ell_{\|\cdot\|}(\gamma_i)$ is uniformly bounded}. 	Then our assumption implies that, for some $L>0$, 
		$$l_i:= \ell_{\|\cdot \|} \big(\gamma_i\big)\le L.$$ 
		As $\{J_i\}_{i\in\mathbb N}$ is a uniformly bounded sequence, we may assume   \begin{equation}\label{limit length}
			l_{\infty}:=\lim_{i\to+\infty}l_i,\quad  J:=\lim_{i\to+
				\infty} J_i
		\end{equation}
		with $l_{\infty}\le L$.

		By parameterizing $\gamma_i$ via arc length on $[0,\,L]$, up to extending as a constant curve if necessary on the interval $[\ell_{\|\cdot \|}(\gamma_i),\,L]$,  we obtain that $\{\gamma_i(\cdot)\}_{i\in\mathbb N}$ is equicontinuous and uniformly bounded as $x_i\to x\neq \infty$. Thus,  up to passing to a subsequence, it follows from the Arzel\'a-Ascoli theorem that, up to extracting a subsequence, 
		\begin{equation}\label{A-A first}
			\gamma_i\to \gamma\in C([0,L]; \mathbb{R}^n) \ \text{ uniformly.}
		\end{equation}
		As $\gamma_i$ is $1$-Lipschitz, $\gamma$ is also $1$-Lipschitz, and thus 
		\begin{equation}\label{y finite 2}
			\ell_{\|\cdot \|}(\gamma)\le \liminf_{i\to \infty}\ell_{\|\cdot \|}(\gamma_i)
		\end{equation}
	as desired.


		In addition, for any point $\zeta\in car( \gamma,J)$, there exists $t_z\in (0,L]$ with $z=\gamma(t_z)\in \gamma$
  so that		\begin{equation}\label{san jiao 1}
			\zeta\in B_{\Vert \cdot\Vert}( {z},\ell_{\|\cdot\|}( \gamma ([0,t_z])  )/J),
		\end{equation}
		which yields a positive constant $\delta:=\ell_{\|\cdot\|}(\gamma ([0,t_z]) )/J-\|z-\zeta\|>0$.		
		
		Note that \eqref{A-A first} yields the existence of a sequence $\{z_i\}_{i\in\mathbb{N}}$ for which $z_i=\gamma_i(t_z)\in \gamma_i$ and $z_i\to z$ as $i\to \infty$. Therefore, by \eqref{y finite 2}, for any positive $\epsilon<\delta/2$, when $i\ge i_0$ for some big integer $i_0$, we have $\|z_i-z\|<\epsilon$ and 
		$$\ell_{\|\cdot\|}(\gamma ([0,t_z]) )\le \ell_{\|\cdot\|}(\gamma_i([0,t_z]) )+\epsilon.$$
		As a result, combining \eqref{san jiao 1}  and the triangle inequality, the estimate above gives   
		\begin{align}
			\|z_i-\zeta\|&\le \|z-\zeta\|+\|z_i-z\|
			< \|z-\zeta\|+\epsilon \nonumber\\
			&\le (\ell_{\|\cdot\|}(\gamma ([0,t_z]) )/J-\delta)+\epsilon \le \ell_{\|\cdot\|}(\gamma_i ([0,t_z]) )/J+2\epsilon-\delta \nonumber\\
			& < \ell_{\|\cdot\|}(\gamma_i ([0,t_z]) )/J, \nonumber
		\end{align}
		so that 
		$$\zeta\in {\bigcup_{i=m}^{+\infty}B_{\Vert \cdot\Vert}\left( {z_i},\ell_{\|\cdot\|}(\gamma_i ([0,t_z]) )/J_i\right)}\subset {\bigcup_{i=m}^{+\infty}car(\gamma_i,J_i)}.$$  
		Consequently, $\zeta\in \bigcap_{m=1}^{+\infty}\Big({\bigcup_{i=m}^{+\infty}car(\gamma_i,J_i)}\Big)$, which implies   \eqref{car lemm}.
		In conclusion, when $y\neq\infty$, Lemma \ref{carrot convergent} holds.
		
		\noindent {\bf Case 2:  $y=\infty$ and $\ell(\gamma_i)\to \infty$}: 
		In this case, as $\gamma_i$ is locally rectifiable and satisfies
		\eqref{uniform finite length}, via suitable truncation, by Step 1 and applying a diagonal argument, we have $\gamma_i$ converges locally uniformly to a curve $\gamma$ parametrized by arc length on $[0,\,\infty)$. 
		Similarly, by taking the union of carrots along $\gamma_i$, \eqref{car lemm}  is obtained also from Step 1. 
	\end{proof}
	
	
		%

	For a bounded domain $\Omega$ and any rectifiable curve  $\gamma\subset\Omega$ joining $x$ to  $x_0$ with  $x,x_0\in \Omega$, 
	recall the definition of $j(t;x,\gamma,\Omega)$ in Definition~\ref{def J}. Then
	$j(t;x,\gamma,\Omega)$ is continuous   with respect to  $t\in [0,1]$, and then  the compactness of $[0,1]$ tells that  there exists a point $t_0\in [0,1]$ such that  $$\frac{\ell_{\|\cdot\|}(\gamma([0,t_0]))}{d_{\Vert \cdot \Vert}(\gamma(t_0),\partial\Omega)}=\sup_{t\in [0,1]}j(t;x,E,\Omega)<+\infty.$$	
	Thus, 
	$$J(x,\Omega;x_0):=\inf\left\{\sup_{t\in [0,1]}j(t;x,\beta,\Omega):   \beta\subset\Omega  \text{  is a curve joining $x$ to $x_0$}  \right\}$$ 
	is finite. We next show that  
	Lemma~\ref{carrot convergent} ensures the existence of the rectifiable curve who make this infimum  be reached.

	\begin{lem}\label{J eqiup}
		Assume $\Omega\subset \mathbb R^n$ is a bounded domain. Let $x,\,x_0 \in\Omega$ be two distinct points with $J(x,\Omega;x_0)<+\infty$. Then 
		there exists a rectifiable curve $\gamma \subset\Omega$ joining $x$ to $x_0$ such that 
		$$\sup_{t\in [0,1] }j(t;x,\gamma ,\Omega)=J (x,\Omega;x_0).$$
	\end{lem}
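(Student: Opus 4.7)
The plan is to take a minimizing sequence of admissible curves, extract a uniform limit via Lemma~\ref{carrot convergent}, and then verify that the limit curve lies in $\Omega$ and realizes the infimum. Write $J := J(x,\Omega;x_0)$ and pick rectifiable curves $\gamma_i\subset\Omega$ joining $x$ to $x_0$ with
$$\sup_{y\in\gamma_i}j(y;x,\gamma_i,\Omega)\le J+\tfrac{1}{i}.$$
Taking $y=x_0$ gives $\ell_{\|\cdot\|}(\gamma_i)\le (J+1)\,d_{\|\cdot\|}(x_0,\partial\Omega)\le (J+1)\diam_{\|\cdot\|}(\Omega)$, so the curves have uniformly bounded length.

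With $x_i\equiv x$, $y_i\equiv x_0$ and $J_i:=\sup_{y\in\gamma_i}j(y;x,\gamma_i,\Omega)\to J$, Lemma~\ref{carrot convergent}(i) (Case~1) applies after arc\nobreakdash-length reparametrization on a common interval $[0,L]$: up to a subsequence $\gamma_i\to\gamma$ uniformly, where $\gamma\subset\overline\Omega$ is a $1$\nobreakdash-Lipschitz curve from $x$ to $x_0$, and (ii) gives $\ell_{\|\cdot\|}(\gamma)\le\liminf_i\ell_{\|\cdot\|}(\gamma_i)$; the same argument applied to subcurves yields, for every $t\in[0,L]$,
$$\ell_{\|\cdot\|}(\gamma[x,\gamma(t)])\le\liminf_{i\to\infty}\ell_{\|\cdot\|}(\gamma_i[x,\gamma_i(t)]).$$

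The main point left is to ensure that $\gamma$ stays inside $\Omega$, so that $j(\cdot;x,\gamma,\Omega)$ is well defined at each point of $\gamma$. Suppose for contradiction that some $y=\gamma(t)\in\partial\Omega$ with $t>0$. Because $\gamma$ is non-constant on $[0,t]$ (its endpoints are the distinct points $x$ and $y$), we have $\ell_{\|\cdot\|}(\gamma[x,y])>0$, hence $\ell_{\|\cdot\|}(\gamma_i[x,y_i])$ is bounded below by a positive constant along a subsequence, where $y_i:=\gamma_i(t)\to y$. Continuity of the distance function gives $d_{\|\cdot\|}(y_i,\partial\Omega)\to 0$, so $j(y_i;x,\gamma_i,\Omega)\to\infty$, contradicting $j(y_i;x,\gamma_i,\Omega)\le J_i\le J+1$. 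Hence $\gamma\subset\Omega$.

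Finally, for every $y=\gamma(t)\in\gamma$ with $t>0$, combining the subcurve length inequality above with $d_{\|\cdot\|}(y_i,\partial\Omega)\to d_{\|\cdot\|}(y,\partial\Omega)>0$ yields
$$j(y;x,\gamma,\Omega)=\frac{\ell_{\|\cdot\|}(\gamma[x,y])}{d_{\|\cdot\|}(y,\partial\Omega)}\le\liminf_{i\to\infty}\frac{\ell_{\|\cdot\|}(\gamma_i[x,y_i])}{d_{\|\cdot\|}(y_i,\partial\Omega)}\le\liminf_{i\to\infty}J_i=J.$$
Taking the supremum over $y\in\gamma$ gives $\sup_{y\in\gamma}j(y;x,\gamma,\Omega)\le J$, and the reverse inequality is immediate from the definition of $J(x,\Omega;x_0)$ as an infimum over admissible curves. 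The delicate step—and the one I expect to require the most care—is the boundary exclusion argument, since $\gamma$ is a priori only known to lie in $\overline\Omega$ and the blow\nobreakdash-up of $j$ must be leveraged uniformly against $J_i\le J+1$; the length lower bound of $\gamma[x,y]$ is what makes this work.
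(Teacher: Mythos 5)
Your proposal is correct and follows essentially the same route as the paper: take a minimizing sequence of curves, note the uniform length bound, apply Lemma~\ref{carrot convergent} (Case 1) to extract a limit curve, and use lower semicontinuity to see that it realizes the infimum. The only difference is in the final step: the paper invokes the carrot inclusion \eqref{car lemm}, so that $car(\gamma,J)\subset\Omega$ immediately gives $\sup_{y\in\gamma}j(y;x,\gamma,\Omega)\le J$ (and in particular keeps $\gamma$ off $\partial\Omega$), whereas you re-derive this directly from uniform convergence, subcurve length lower semicontinuity, and an explicit boundary-exclusion contradiction, which is a valid and equally rigorous variant.
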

	
	\begin{proof}
		Choose a minimizing sequence $\{\gamma_i\}_{i\in\mathbb{N}^+},\,\gamma_i\subset \Omega$ of rectifiable curves joining $x$ to $x_0 $  so that  
		$$\lim_{i\to \infty} J_i:=\lim_{i\to \infty}\sup_{t\in [0,1]}j(t;x,\gamma_i,\Omega)=J(x,\Omega;x_0)=:J.$$
		Then by the Definition~\ref{def J}, the uniform boundedness of $J_i$ implies that  $\ell_{\|\cdot\|}(\gamma_i)$ is bounded uniformly. Thus 
		by letting $car(\gamma_i,J_i)$ be the $J_i$-$carrot$ joining $x$ to $x_0$ for $i\in\mathbb{N}$,  
		Lemma \ref{carrot convergent} tells that  there exists a rectifiable curve $\gamma $ of a $J$-$carrot$ joining $x$ to $x_0$, such that 
		$$car(\gamma,J) \subset\Omega,$$
		which implies that $J \ge \sup_{t\in 
 [0,1]}j(t;x,\gamma,\Omega) .$
		On the other hand,  the convergence of $J_i$ to $J$  together with Definition \ref{def J}, gives 
		$$J \le \sup_{t\in 
 [0,1]}j(t;x,\gamma ,\Omega) .$$		
		The proof is completed. 
		
	\end{proof}

	\begin{lem}\label{4c^2}
		Let $x_0\in\Omega$ and $\Omega\subset\mathbb{R}^n$ be a bounded $J$-carrot John domain with $J:=J(\Omega;x_0)$. Then for any $z\in\Omega$, $J(\Omega;z)$ is finite. Moreover, for any $y\in\Omega$ satisfying 
		$$d_{\Vert \cdot\Vert}(y,\partial\Omega)=\max_{x\in\Omega}d_{\Vert \cdot\Vert}(x,\partial\Omega),$$
		we get $J(\Omega;y)\le C(n, C_{\|\cdot\|},\,J)$.
	\end{lem}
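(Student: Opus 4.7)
The plan is to prove both assertions simultaneously by exhibiting, for any target center (either $z$ or $y$) and any $x\in\Omega$, a witness curve from $x$ to the target obtained by \emph{concatenating} two $x_0$-centered John curves through $x_0$. Using Lemma~\ref{J eqiup}, for every $w\in\Omega$ I fix $\beta_w\subset\Omega$ from $w$ to $x_0$ with $\sup_{y\in\beta_w}j(y;w,\beta_w,\Omega)\le J$, and for a given $z\in\Omega$ and arbitrary $x\in\Omega$ I set $\gamma_x:=\beta_x\cdot\beta_z^{-1}$. The goal is to bound $\sup_{y'\in\gamma_x}j(y';x,\gamma_x,\Omega)$ by a constant independent of $x$.

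On the $\beta_x$-half of $\gamma_x$ the bound $j(y';x,\gamma_x,\Omega)\le J$ is immediate from the John property of $\beta_x$. On the $\beta_z^{-1}$-half, the numerator satisfies
$$\ell_{\|\cdot\|}(\gamma_x[x,y'])\le \ell_{\|\cdot\|}(\beta_x)+\ell_{\|\cdot\|}(\beta_z^{-1})\le (1+C_{\|\cdot\|})\,J\,d_{\|\cdot\|}(x_0,\partial\Omega),$$
using the John property at $x_0$ for both $\beta_x$ and $\beta_z$, together with the fact, immediate from \eqref{non-balance}, that reversing the orientation of a rectifiable curve distorts its Minkowski length by at most a factor $C_{\|\cdot\|}$.

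For the denominator I split on $s:=\ell_{\|\cdot\|}(\beta_z[z,y'])$. If $s\ge d_{\|\cdot\|}(z,\partial\Omega)/(2C_{\|\cdot\|})$, the John property of $\beta_z$ gives $d_{\|\cdot\|}(y',\partial\Omega)\ge s/J\ge d_{\|\cdot\|}(z,\partial\Omega)/(2C_{\|\cdot\|}J)$. Otherwise $\|z-y'\|\le C_{\|\cdot\|}\|y'-z\|\le C_{\|\cdot\|}\,s<d_{\|\cdot\|}(z,\partial\Omega)/2$, and the Minkowski triangle inequality $\|y'-w\|\ge\|z-w\|-\|z-y'\|$, applied to every $w\in\partial\Omega$, forces $d_{\|\cdot\|}(y',\partial\Omega)\ge d_{\|\cdot\|}(z,\partial\Omega)/2$. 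In either case $d_{\|\cdot\|}(y',\partial\Omega)\ge d_{\|\cdot\|}(z,\partial\Omega)/(2C_{\|\cdot\|}J)$, and combining with the numerator bound yields
$$J(\Omega;z)\le C(C_{\|\cdot\|})\,J^2\,\frac{d_{\|\cdot\|}(x_0,\partial\Omega)}{d_{\|\cdot\|}(z,\partial\Omega)},$$
which is finite for every interior $z$. For the second assertion, specialize to $z=y$; maximality $d_{\|\cdot\|}(y,\partial\Omega)\ge d_{\|\cdot\|}(x_0,\partial\Omega)$ absorbs the ratio into $1$, producing $J(\Omega;y)\le C(n,C_{\|\cdot\|},J)$.

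The main technical delicacy I expect is the asymmetry of a general Minkowski norm, entering both through the possibly different length of $\beta_z^{-1}$ compared with $\beta_z$ and through the gap between $\|z-y'\|$ and $\|y'-z\|$ in the denominator estimate; each instance costs a factor $C_{\|\cdot\|}$, and tracking them precisely is what forces the case split near the new center and what makes $C_{\|\cdot\|}$ appear explicitly in the final constant.
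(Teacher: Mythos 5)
Your argument is correct, but it is not the route the paper takes: the paper does not prove Lemma~\ref{4c^2} at all, it simply invokes V\"ais\"al\"a's center-changing result \cite[Theorem 3.6]{V1994} and asserts that the argument survives the passage to a general Minkowski norm with an extra dependence on $C_{\|\cdot\|}$. You instead give a self-contained concatenation proof: for the curve $\gamma_x=\beta_x\cdot\beta_z^{-1}$ through $x_0$, the John property of $\beta_x$ handles the first half, the endpoint bounds $\ell(\beta_x),\ell(\beta_z)\le J\,d_{\|\cdot\|}(x_0,\partial\Omega)$ (with the factor $C_{\|\cdot\|}$ for the reversed half) control the numerator on the second half, and your case split on $s=\ell_{\|\cdot\|}(\beta_z[z,y'])$ correctly yields $d_{\|\cdot\|}(y',\partial\Omega)\ge d_{\|\cdot\|}(z,\partial\Omega)/(2C_{\|\cdot\|}J)$, giving $J(\Omega;z)\le C(C_{\|\cdot\|})J^2\max\{1,\,d_{\|\cdot\|}(x_0,\partial\Omega)/d_{\|\cdot\|}(z,\partial\Omega)\}$ and, at a point of maximal inradius, the bound $C(C_{\|\cdot\|})J^2$, which is even independent of $n$. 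What your approach buys is exactly what the paper leaves implicit: an explicit constant and explicit tracking of where the norm asymmetry $C_{\|\cdot\|}$ enters (reversed lengths and the flip between $\|z-y'\|$ and $\|y'-z\|$), whereas the citation buys brevity and, in V\"ais\"al\"a's formulation, a statement valid for an arbitrary new center with constants depending on the ratio of distances to the boundary -- which your displayed inequality in fact also recovers. Two cosmetic points to keep the write-up consistent with the paper's conventions: the concatenation $\beta_x\cdot\beta_z^{-1}$ need not be injective, so either pass to a subarc or note that the paper itself concatenates curves in the same loose sense (proof of Lemma~\ref{continuous 1}); and since $J(x,\Omega;x_0)\le J<\infty$, you may either use Lemma~\ref{J eqiup} as you do or simply take curves with $\sup_{y'}j\le J+\epsilon$ and let $\epsilon\to0$, avoiding any reliance on attainment.
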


	This lemma directly follows from Theorem 3.6 in \cite{V1994}. Despite their findings being initially formulated for the standard Euclidean norm, one can establish them for a general Minkowski norm in 
	$\mathbb R^n$
	by employing identical arguments, necessitating only notational adjustments, with additional dependency on 
	$ C_{\|\cdot\|}$\footnote{In the proof of  \cite[Theorem 3.6]{V1994}, by choosing $y$ as the center of the largest ball contained in $\Omega$, for any $x\in \Omega$, the John curve $\gamma$, as the core of $cig(\gamma, J)\subset \Omega$ joining $x$ to $y$, is proved to be the core of $car(\gamma, J_1)$ for some $J_1$. Due to the fact that $\mathcal K_{\|\cdot\|}$ might not be symmetric with respect to the origin, the upper bound estimate for $\ell_{\|\cdot\|}(\gamma[x,y])$ becomes $(1+C_{\|\cdot\|})d_{\|\cdot\|}(y,\partial\Omega)$, different from the Euclidean case. Consequently, $J_1$ further depends on $C_{\|\cdot\|}$; see also Remark \ref{para direction}.}.  
	
	\begin{lem}\label{continuous 1}
		Let $\Omega\subset \mathbb{R}^n$ be a bounded John domain.
		Then 
		$$J(\cdot,\Omega;\cdot): \Omega\times\Omega \to [1,+\infty),\quad (x,\,y)\mapsto J(x,\Omega; y)$$
		is   locally Lipschitz continuous. 
	\end{lem}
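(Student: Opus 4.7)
The plan is to fix an arbitrary $(x_0,y_0)\in\Omega\times\Omega$, set $J_0:=J(x_0,\Omega;y_0)$ (finite by Lemma~\ref{4c^2}), and invoke Lemma~\ref{J eqiup} to obtain a rectifiable curve $\gamma_0\subset\Omega$ joining $x_0$ to $y_0$ with $\sup_{z\in\gamma_0}j(z;x_0,\gamma_0,\Omega)=J_0$. For nearby points $(x,y)$, writing $\delta:=\|x-x_0\|+\|y-y_0\|$ and restricting $\delta$ to be small compared with $d_{\|\cdot\|}(x_0,\partial\Omega)$, $d_{\|\cdot\|}(y_0,\partial\Omega)$ and $C_{\|\cdot\|}$, I would take as a competitor the concatenation $\gamma_{xy}:=[x,x_0]\cup\gamma_0\cup[y_0,y]$ and aim to show $\sup_{z\in\gamma_{xy}}j(z;x,\gamma_{xy},\Omega)\le J_0+C\delta$ for a constant $C$ depending only on $J_0$, $C_{\|\cdot\|}$, $d_{\|\cdot\|}(x_0,\partial\Omega)$, and $d_{\|\cdot\|}(y_0,\partial\Omega)$.

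A preliminary ingredient I would establish is the uniform lower bound $d_{\|\cdot\|}(z,\partial\Omega)\gtrsim d_{\|\cdot\|}(x_0,\partial\Omega)/(C_{\|\cdot\|}J_0)$ for every $z\in\gamma_0$: when $\|z-x_0\|\le d_{\|\cdot\|}(x_0,\partial\Omega)/(2C_{\|\cdot\|})$ this follows from the $C_{\|\cdot\|}$-Lipschitz continuity of $d_{\|\cdot\|}(\cdot,\partial\Omega)$, and otherwise from $\ell_{\|\cdot\|}(\gamma_0[x_0,z])\ge\|z-x_0\|$ together with the defining inequality $\ell_{\|\cdot\|}(\gamma_0[x_0,z])\le J_0\, d_{\|\cdot\|}(z,\partial\Omega)$. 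With this in hand, on $\gamma_0$ the decomposition $j(z;x,\gamma_{xy},\Omega)=j(z;x_0,\gamma_0,\Omega)+\|x-x_0\|/d_{\|\cdot\|}(z,\partial\Omega)$ produces a Lipschitz correction of size $O(\|x-x_0\|)$, and on the initial segment $[x,x_0]$ one has $d_{\|\cdot\|}(z,\partial\Omega)\ge d_{\|\cdot\|}(x_0,\partial\Omega)/2$, so $j(z;x,\gamma_{xy},\Omega)$ is itself $O(\|x-x_0\|)$.

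The delicate step, which I expect to be the main obstacle, is the estimate on the terminal segment $[y_0,y]$. For such $z$ the length $\ell_{\|\cdot\|}(\gamma_{xy}[x,z])$ already contains the full length $L_0:=\ell_{\|\cdot\|}(\gamma_0)$, and one only knows $L_0/d_{\|\cdot\|}(y_0,\partial\Omega)\le J_0$. A crude bound $d_{\|\cdot\|}(z,\partial\Omega)\ge d_{\|\cdot\|}(y_0,\partial\Omega)/2$ would yield $L_0/d_{\|\cdot\|}(z,\partial\Omega)\le 2J_0$, losing an unwanted constant factor. To keep the leading order equal to $J_0$ I would instead use $d_{\|\cdot\|}(z,\partial\Omega)\ge d_{\|\cdot\|}(y_0,\partial\Omega)-C_{\|\cdot\|}\|y-y_0\|$ together with the expansion $1/(1-\varepsilon)\le 1+2\varepsilon$ for $\varepsilon=C_{\|\cdot\|}\|y-y_0\|/d_{\|\cdot\|}(y_0,\partial\Omega)\le 1/2$, yielding $L_0/d_{\|\cdot\|}(z,\partial\Omega)\le J_0+2J_0 C_{\|\cdot\|}\|y-y_0\|/d_{\|\cdot\|}(y_0,\partial\Omega)$; the remaining terms $\|x-x_0\|/d_{\|\cdot\|}(z,\partial\Omega)$ and $\|z-y_0\|/d_{\|\cdot\|}(z,\partial\Omega)$ contribute only $O(\delta)$.

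To obtain the matching lower bound $J(x_0,\Omega;y_0)\le J(x,\Omega;y)+C\delta$, I would repeat the same construction with the roles of $(x_0,y_0)$ and $(x,y)$ swapped, applying Lemma~\ref{J eqiup} to an optimal curve for $(x,y)$; the constants stay uniform in a neighborhood because $d_{\|\cdot\|}(x,\partial\Omega)\ge d_{\|\cdot\|}(x_0,\partial\Omega)/2$ and $d_{\|\cdot\|}(y,\partial\Omega)\ge d_{\|\cdot\|}(y_0,\partial\Omega)/2$ for small $\delta$, and $J(x,\Omega;y)\le J_0+C\delta$ by the forward estimate. Combining the two inequalities yields the desired local Lipschitz bound on $J(\cdot,\Omega;\cdot)$.
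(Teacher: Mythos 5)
Your proposal is correct and follows essentially the same route as the paper's proof: the same concatenated competitor curve $[x,x_0]\cup\gamma_0\cup[y_0,y]$ built on the optimal curve from Lemma~\ref{J eqiup}, the same lower bound $d_{\|\cdot\|}(\gamma_0,\partial\Omega)\gtrsim d_{\|\cdot\|}(x_0,\partial\Omega)/(C_{\|\cdot\|}J_0)$, the same careful expansion on the terminal segment (the paper's \eqref{sp8}--\eqref{sp7}) to keep the leading constant equal to $J_0$, and the same symmetric argument with the bound $J(x,\Omega;y)\le J_0+C\delta$ to make the reverse constants uniform. The only thing to spell out, as the paper does in its Step 3, is that the estimate must be applied with arbitrary base points in a small neighborhood (with constants uniform there) to pass from a Lipschitz bound centered at $(x_0,y_0)$ to a genuine local Lipschitz bound for all pairs of nearby points.
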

	
	\begin{proof}
		Given $(x,\,y)\in \Omega\times\Omega$ and $(\hat{x},\hat{y})\in \Omega\times\Omega$ close to $(x,\,y)$, 
		We  first estimate 
		$$J(\hat{x},\Omega;\hat{y})-J(x,\Omega;y)$$
		from above and below, respectively.
		
		\noindent {\bf Step 1: Estimate $J(\hat{x},\Omega;\hat{y})-J(x,\Omega;y)$ from above. }
		Let $$J:=J(x,\Omega;y).$$ 
		Then Lemma~\ref{J eqiup} yields a rectifiable curve $ \gamma\subset\Omega$ joining $x$ to $y$ together with the corresponding $J$-carrot $car(\gamma,J)$, such that 
		\begin{equation}\label{sp1}
			\sup_{t\in 
 [0,1]}j(t;x,\gamma,\Omega) =J(x,\Omega;y)=J\quad \text{and}\quad car(\gamma,J)\subset\Omega.
		\end{equation}
		As a consequence of the compactness of $[0,1]$,  the definition of $j(t;x,\gamma,\Omega)$ gives us a point $s\in [0,1]$,  such that
		
		\begin{equation}\label{sp2}
			\frac{\ell_{\|\cdot\|}(\gamma([0,s]))}{d_{\Vert\cdot\Vert}(\gamma(s),\partial\Omega)}=J=\sup_{t \in 
 [0,1]}j(t;x,\gamma,\Omega).
		\end{equation}

		We claim that
		\begin{equation}\label{sp1.5}
			d_{\Vert \cdot\Vert}(\gamma,\partial\Omega)\ge\frac{d_{\Vert\cdot\Vert}(x,\partial \Omega)}{2C_{\|\cdot\|}  J}.
		\end{equation}
		Indeed, for any $z\in \gamma\cap B_{\Vert\cdot\Vert}\left(x,\frac 1{2}{d_{\Vert\cdot\Vert}(x,\partial \Omega)}\right)$, the triangle inequality gives
		$$d_{\Vert \cdot\Vert}(z,\partial\Omega) \ge \frac 1 {2} {d_{\Vert\cdot\Vert}(x,\partial \Omega)};$$
		while for $z\in \gamma\setminus B_{\Vert\cdot\Vert}\left(x,\frac 1{2}{d_{\Vert\cdot\Vert}(x,\partial \Omega)}\right)$, it follows
		from  \eqref{sp1}  and the definition of $car(\gamma,J)$  that  
		$$ d_{\Vert\cdot\Vert}(z,\partial\Omega)\ge \frac{\ell_{\|\cdot\|}(\gamma[x,z])}{J}\ge \frac{\ell_{\|\cdot\|}(\gamma[z,x])}{C_{\|\cdot\|}J}\ge\frac {d_{\Vert\cdot\Vert}(x,\partial \Omega)}{2C_{\|\cdot\|}J}.$$ 
		As $J\ge 1$, our claim \eqref{sp1.5} follows.

		Let $(\hat{x},\hat{y})\in \Omega\times\Omega$ close to $(x,\,y)$ with 
		$$\hat{x }\in B_{\|\cdot\|}\left(x,\frac{ d_{\Vert\cdot\Vert}(x,\partial\Omega)}{2}\right) \quad \text{ and } \quad \hat{y}\in B_{\|\cdot\|}\left(y,\frac{ d_{\Vert\cdot\Vert}(y,\partial\Omega)}{2}\right). $$
		Set 
		\begin{equation}\label{gamma line}
			\hat{\gamma}\subset L_{\hat{x},x} \cup \gamma \cup L_{y,\hat{y}}, 
		\end{equation}
	be a rectifiable curve joining $\hat x$ to $\hat y$. where  $L_{\hat{x},x}$ is the line segment joining  
		$\hat{x}$ to $x$ and $L_{y,\hat{y}}$ is the one joining
		$y$ to $\hat{y}$. 
		As $y\in \gamma$, we conclude from \eqref{sp2} that
		$$\frac{\ell_{\|\cdot\|}(\gamma[x,y])}{d_{\Vert\cdot\Vert}(y,\partial\Omega)} \le \frac{\ell_{\|\cdot\|}(\gamma([0,s]))}{d_{\Vert\cdot\Vert}(\gamma(s),\partial\Omega)},$$
		and hence
		\begin{align}\label{sp8}
			& \frac{\ell_{\|\cdot\|}(\gamma[x,y])}{d_{\Vert\cdot\Vert}(y,\partial\Omega)-\Vert y-\hat{y}\Vert}-\frac{\ell_{\|\cdot\|}(\gamma([0,s]))}{d_{\Vert\cdot\Vert}(\gamma(s),\partial\Omega)}\nonumber \\
			&=\left(\frac{\ell_{\|\cdot\|}(\gamma[x,y])}{d_{\Vert\cdot\Vert}(y,\partial\Omega)-\Vert y-\hat{y}\Vert}-\frac{\ell_{\|\cdot\|}(\gamma[x,y])}{d_{\Vert\cdot\Vert}(y,\partial\Omega)} \right)+\left(\frac{\ell_{\|\cdot\|}(\gamma[x,y])}{d_{\Vert\cdot\Vert}(y,\partial\Omega)} -\frac{\ell_{\|\cdot\|}(\gamma([0,s]))}{d_{\Vert\cdot\Vert}(\gamma(s),\partial\Omega)}\right) \nonumber \\  
			&\le \frac{\Vert y-\hat{y}\Vert}{d_{\Vert\cdot\Vert}(y,\partial\Omega)(d_{\Vert\cdot\Vert}(y,\partial\Omega)-\Vert y-\hat{y}\Vert)} \ell_{\|\cdot\|}(\gamma[x,y]).   
		\end{align}

		For each $z\in \hat \gamma$, we now estimate 
		$$ \frac{\ell_{\|\cdot\|}(\hat{\gamma}[\hat{x},z])}{d_{\Vert\cdot\Vert}(z,\partial\Omega)}
		-\frac{\ell_{\|\cdot\|}(\gamma([0,s]))}{d_{\Vert\cdot\Vert}(\gamma(s),\partial\Omega)}$$
		in three cases. 
		
		First of all, when $z\in \gamma$, as
		$$\frac{\ell_{\|\cdot\|}(\gamma[x,z])}{d_{\Vert\cdot\Vert}(z,\partial\Omega)}\le \sup_{t \in [0,1]}j(t;x,\gamma,\Omega)=\frac{\ell_{\|\cdot\|}(\gamma([0,s]))}{d_{\Vert\cdot\Vert}(\gamma(s),\partial\Omega)} $$
		by \eqref{sp2} and $d_{\|\cdot\|}(\gamma,\partial \Omega)=\inf_{w\in\gamma}d_{\|\cdot\|}(w,\partial \Omega)$, then we have
		\begin{equation}\label{sp7.1}
			\frac{\ell_{\|\cdot\|}(\hat \gamma[\hat{x},z])}{d_{\Vert\cdot\Vert}(z,\partial\Omega)}
			-\frac{\ell_{\|\cdot\|}(\gamma([0,s]))}{d_{\Vert\cdot\Vert}(\gamma(s),\partial\Omega)}\le \frac{\ell_{\|\cdot\|}(\gamma[x,z])+ \|x-\hat{x}\|}{d_{\Vert\cdot\Vert}(z,\partial\Omega)}
			-\frac{\ell_{\|\cdot\|}(\gamma([0,s]))}{d_{\Vert\cdot\Vert}(\gamma(s),\partial\Omega)}\le \frac{\Vert x-\hat{x}\Vert}{d_{\Vert\cdot\Vert}(\gamma,\partial\Omega)}.
		\end{equation}
		Secondly, suppose that $z\in L_{y,\hat{y}}$. Then as \eqref{gamma line} yields
		$$\ell_{\|\cdot\|}(\gamma[\hat x,z]) \le \Vert z-y\Vert + \ell_{\|\cdot\|}(\gamma[x,y])+ \Vert x-\hat{x}\Vert\le \Vert \hat y-y\Vert + \ell_{\|\cdot\|}(\gamma[x,y])+ \Vert x-\hat{x}\Vert,$$
		and the triangle inequality yields
		$$d_{\Vert\cdot\Vert}(z,\partial\Omega)\ge d_{\Vert \cdot\Vert}(y,\partial\Omega)-\Vert y-z\Vert\ge d_{\Vert \cdot\Vert}(y,\partial\Omega)-\Vert y-\hat{y}\Vert,$$
		it follows from \eqref{sp8} that
		\begin{align}
			&\frac{\ell_{\|\cdot\|}(\hat \gamma[\hat x,z])}{d_{\Vert\cdot\Vert}(z,\partial\Omega)}
			-\frac{\ell_{\|\cdot\|}(\gamma([0,s]))}{d_{\Vert\cdot\Vert}(\gamma(s),\partial\Omega)}\nonumber\\
			\le& \frac{\Vert x-\hat{x}\Vert +\Vert\hat{y} -y\Vert }{d_{\Vert \cdot\Vert}(y,\partial\Omega)-\Vert y-\hat{y}\Vert} + \frac{ \ell_{\|\cdot\|}(\gamma[x,y]) }{d_{\Vert \cdot\Vert}(y,\partial\Omega)-\Vert y-\hat{y}\Vert}-\frac{\ell_{\|\cdot\|}(\gamma([0,s]))}{d_{\Vert\cdot\Vert}(\gamma(s),\partial\Omega)}\nonumber\\
			\le &  \frac{\Vert x-\hat{x}\Vert +C_{\|\cdot\|}\Vert y-\hat{y}\Vert }{d_{\Vert\cdot\Vert}(y,\partial\Omega)-\Vert y-\hat{y}\Vert}+  \frac{\Vert y-\hat{y}\Vert}{d_{\Vert\cdot\Vert}(y,\partial\Omega)(d_{\Vert\cdot\Vert}(y,\partial\Omega)-\Vert \hat{y}-y\Vert)} \ell_{\|\cdot\|}(\gamma[x,y])\nonumber\\
			\le &  \frac{\Vert x-\hat{x}\Vert d_{\Vert\cdot\Vert}(y,\partial\Omega)+C_{\|\cdot\|}\Vert y-\hat{y}\Vert d_{\Vert\cdot\Vert}(y,\partial\Omega)+  \Vert y-\hat y\Vert \ell_{\|\cdot\|}(\gamma[x,y]) }{d_{\Vert\cdot\Vert}(y,\partial\Omega)\left(d_{\Vert\cdot\Vert}(y,\partial\Omega)-\Vert y-\hat{y}\Vert\right)}\nonumber\\
			\le& \frac{2C_{\|\cdot\|}( d_{\Vert\cdot\Vert}(y,\partial\Omega)+\ell_{\|\cdot\|}(\gamma[x,y]))}{\left(d_{\Vert\cdot\Vert}(y,\partial\Omega)\right)^2}\left(\Vert x-\hat{x}\Vert +\Vert y-\hat{y}\Vert \right)\le \frac{C(n, C_{\|\cdot\|},\, J) }{ d_{\Vert\cdot\Vert}(y,\partial\Omega)}\left(\Vert x-\hat{x}\Vert +\Vert y-\hat{y}\Vert \right).  
			\label{sp7}   
		\end{align}
		The last case is when $z\in L_{\hat{x},x}$, $ \Vert x- z\Vert \le  \Vert x-\hat{x}\Vert$
		and then
		$$d_{\Vert\cdot\Vert}(z,\partial\Omega)\ge d_{\Vert\cdot\Vert}(x,\partial\Omega)-\Vert x-z\Vert \ge d_{\Vert\cdot\Vert}(x,\partial\Omega)-\Vert x-\hat{x}\Vert. $$
		Thus 	we obtain that
		\begin{equation}\label{sp7.2}
			\frac{\ell_{\|\cdot\|}(\hat \gamma[\hat x,z])}{d_{\Vert\cdot\Vert}(z,\partial\Omega)}
			-\frac{\ell_{\|\cdot\|}(\gamma([0,s]))}{d_{\Vert\cdot\Vert}(\gamma(s),\partial\Omega)} \le  \frac{\Vert x-\hat{x}\Vert}{d_{\Vert\cdot\Vert}(x,\partial\Omega)-\Vert x-\hat{x}\Vert}-\frac{\ell_{\|\cdot\|}(\gamma([0,s]))}{d_{\Vert\cdot\Vert}(\gamma(s),\partial\Omega)}\le \frac{2\Vert x-\hat{x}\Vert}{d_{\Vert\cdot\Vert}(x,\partial\Omega) }.
		\end{equation}

		All in all, we conclude from \eqref{sp7.1},\eqref{sp7} and \eqref{sp7.2} that, for any $t\in [0,1]$,
		\begin{multline}\label{sp3}
			\frac{\ell_{\|\cdot\|}(\hat \gamma([0,t]))}{d_{\Vert\cdot\Vert}(\hat\gamma(t),\partial\Omega)}
			-\frac{\ell_{\|\cdot\|}(\gamma([0,s]))}{d_{\Vert\cdot\Vert}(\gamma(s),\partial\Omega)}\\
			\le
			\max \left\{
			\frac{\Vert x-\hat{x}\Vert}{d_{\Vert\cdot\Vert}(\gamma,\partial\Omega)},\,
			\frac{C(n,C_{\|\cdot\|},\, J) }{ d_{\Vert\cdot\Vert}(y,\partial\Omega)}\left(\Vert x-\hat{x}\Vert +\Vert y-\hat{y}\Vert \right) ,\,
			\frac{2\Vert x-\hat{x}\Vert}{d_{\Vert\cdot\Vert}(x,\partial\Omega) } \right\}.
		\end{multline}
		As a result,  we conclude from \eqref{sp1.5} that
		\begin{align} \label{upper}
			&J(\hat{x},\Omega;\hat{y})-J(x,\Omega;y)\nonumber\\
			\le&\sup_{t\in 
 [0,1]}\frac{\ell_{\|\cdot\|}(\hat \gamma([0,t]))}{d_{\Vert\cdot\Vert}(\hat \gamma(t),\partial\Omega)}-\frac{\ell_{\|\cdot\|}(\gamma([0,s]))}{d_{\Vert\cdot\Vert}(\gamma(s),\partial\Omega)}\nonumber\\
			\le &\max \left\{
			\frac{\Vert x-\hat{x}\Vert}{d_{\Vert\cdot\Vert}(\gamma,\partial\Omega)},\,
			\frac{C(n,C_{\|\cdot\|},\, J)  }{ d_{\Vert\cdot\Vert}(y,\partial\Omega)}\left(\Vert x-\hat{x}\Vert +\Vert y-\hat{y}\Vert \right) ,\,
			\frac{2\Vert x-\hat{x}\Vert}{d_{\Vert\cdot\Vert}(x,\partial\Omega) } \right\}\nonumber\\
			\le &  \frac{C(n,C_{\|\cdot\|},\, J) }{d_{\Vert\cdot\Vert}(\gamma,\partial\Omega)}\left(\Vert x-\hat{x}\Vert +\Vert y-\hat{y}\Vert \right)\le \frac{C(n,C_{\|\cdot\|},\, J) }{d_{\Vert\cdot\Vert}(x,\partial\Omega)}\left(\Vert x-\hat{x}\Vert +\Vert y-\hat{y}\Vert \right).
		\end{align}

		\noindent {\bf Step 2: Estimate $J(x,\Omega;y)-J(\hat{x},\Omega;\hat{y})$ from above. }
		Similarily, we repeat the argument and gain the following estimate:
		\begin{align}\label{lower es}
			&J(x,\Omega;y)-J(\hat{x},\Omega;\hat{y})\le \frac{C(n,C_{\|\cdot\|},\, J) }{d_{\Vert\cdot\Vert}(x,\partial\Omega)}\left(\Vert x-\hat{x}\Vert +\Vert y-\hat{y}\Vert \right),
		\end{align}
		when $\Vert x-\hat{x}\Vert+\Vert y-\hat{y}\Vert<\delta$ for a constant $\delta=\delta(x,y,C_{\|\cdot\|})$   satisfying 
		$$0<\delta\le\frac{1}{2}\min\{d_{\Vert \cdot\Vert}(x,\partial\Omega),d_{\Vert \cdot\Vert}(y,\partial\Omega)\}.$$ 
		Detailed proof of \eqref{lower es} is included in the Appendix~\ref{lower bound}.
		
		\noindent {\bf Step 3: Conclusion. }
		Combining \eqref{upper} and \eqref{lower es},
		we get that $J(\cdot,\Omega,\cdot)$ is continuous and 
		\begin{equation}\label{continui}
			\vert J(x,\Omega;y)-J(\hat{x},\Omega;\hat{y})\vert \le \frac{C(n,C_{\|\cdot\|},\, J) }{d_{\Vert\cdot\Vert}(x,\partial\Omega)}\left(\Vert x-\hat{x}\Vert +\Vert y-\hat{y}\Vert \right),
		\end{equation}
		when  $\Vert x-\hat{x}\Vert+\Vert y-\hat{y}\Vert<\delta$.
		Thus
		for any $(x,y)\in\Omega\times\Omega$, by letting  
		$$U_{x,y}:=\left\{(a,b)\in\Omega\times\Omega:\Vert a-x\Vert+\Vert b-y\Vert<\frac{1}{16C_{\|\cdot\|}}\delta\right\}, $$
		the estimate \eqref{continui} yields that whenever  $(x_1,y_1),(x_2,y_2)\in U_{x,y}$,  
		\begin{equation}\label{lip} 
			|J(x_1,\Omega;y_1)-J(x_2,\Omega;y_2)|\le \frac{C_{x,y}}{d_{x,y}}\left(\Vert x_1-x_2\Vert+\Vert y_1-y_2\Vert\right),
		\end{equation}
		where 
		$$C_{x,y}=\max_{(a,b)\in \overline{U}_{x,y}}C(n,C_{\|\cdot\|},\, J(a,\Omega;b))<\infty$$
		by the John assumption on $\Omega$,  
		and $$d_{x,y}=\min_{(a,b)\in \overline{U}_{x,y}}d_{\Vert\cdot\Vert}(a,\partial \Omega).$$
		From \eqref{lip} we finally conclude that  $J(\cdot,\Omega;\cdot)$ is locally Lipschitz continuous.


	\end{proof}
	
	Recall that
	$$J(\Omega;x_0):=\sup_{x\in\Omega}J(x,\Omega;x_0). $$

	\begin{lem}\label{continuous 2}
		Let $\Omega\subset\mathbb{R}^n$ be a bounded John domain. Then $J(\Omega;\cdot):\Omega\to [1,+\infty)$ is a lower-semicontinuous function, such that 
		\begin{enumerate}
			\item[(i)] For $y\in \Omega$,  $r_{\Omega}:=\max_{z\in\Omega}\{d_{\Vert \cdot\Vert}(z,\partial\Omega)\}$, we have  
			$$J(\Omega;y)\ge \frac{r_{\Omega}-d_{\Vert \cdot\Vert}(y,\partial\Omega)}{C_{\|\cdot\|} d_{\Vert \cdot\Vert}(y,\partial\Omega)};$$ 
			\item[(ii)] Let $x_{\Omega}\in\Omega$ be a point with $d_{\Vert\cdot\Vert}(x_{\Omega},\partial\Omega)=r_{\Omega}$. Then  $J(\Omega;\cdot)$ attains its infimum in 
			$$\{x\in\Omega: d_{\Vert\cdot\Vert}(x,\partial\Omega)\ge r_0\},$$ where $r_0:=\frac{r_{\Omega}}{1+2C_{\|\cdot\|}J(\Omega;x_{\Omega})}>0.$
		\end{enumerate}
		
	\end{lem}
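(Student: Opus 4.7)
The plan is to establish the three claims in the stated order, leaning on Lemma~\ref{continuous 1} (locally Lipschitz continuity of $J(\cdot,\Omega;\cdot)$ on $\Omega\times\Omega$) and Lemma~\ref{4c^2} (finiteness of $J(\Omega;z)$ for every $z\in\Omega$). For the lower semicontinuity, I would simply observe that Lemma~\ref{4c^2} guarantees $J(x,\Omega;y)\in[1,\infty)$ for every $(x,y)\in\Omega\times\Omega$, while Lemma~\ref{continuous 1} gives that each slice $y\mapsto J(x,\Omega;y)$ is continuous on $\Omega$. Hence the identity
$$J(\Omega;y)=\sup_{x\in\Omega}J(x,\Omega;y)$$
exhibits $J(\Omega;\cdot)$ as a pointwise supremum of continuous functions, and so is lower semicontinuous.

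For part (i), I would take the point $x_\Omega$ with $d_{\|\cdot\|}(x_\Omega,\partial\Omega)=r_\Omega$ and begin from the trivial bound $J(\Omega;y)\ge J(x_\Omega,\Omega;y)$. For any rectifiable curve $\gamma\subset\Omega$ from $x_\Omega$ to $y$, evaluating the functional $j(\cdot;x_\Omega,\gamma,\Omega)$ at the endpoint $z=y$ produces
$$\sup_{z\in\gamma}j(z;x_\Omega,\gamma,\Omega)\ge \frac{\ell_{\|\cdot\|}(\gamma[x_\Omega,y])}{d_{\|\cdot\|}(y,\partial\Omega)}\ge \frac{\|y-x_\Omega\|}{d_{\|\cdot\|}(y,\partial\Omega)}.$$
Selecting a boundary point $z_0$ almost attaining $d_{\|\cdot\|}(y,\partial\Omega)$, the triangle inequality $r_\Omega\le\|x_\Omega-z_0\|\le\|x_\Omega-y\|+\|y-z_0\|$ yields $\|x_\Omega-y\|\ge r_\Omega-d_{\|\cdot\|}(y,\partial\Omega)$, and the asymmetry control $\|x_\Omega-y\|\le C_{\|\cdot\|}\|y-x_\Omega\|$ from \eqref{non-balance} converts this into a matching lower bound on $\|y-x_\Omega\|$. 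Taking the infimum over $\gamma$ then delivers (i).

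For part (ii), set $J_0:=J(\Omega;x_\Omega)$ and $K:=\{x\in\Omega:d_{\|\cdot\|}(x,\partial\Omega)\ge r_0\}$; this set is non-empty (it contains $x_\Omega$), bounded, and closed in $\mathbb R^n$ (a limit point lies at distance $\ge r_0>0$ from $\partial\Omega$ and hence inside $\Omega$), hence compact. For any $y\in\Omega$ with $d_{\|\cdot\|}(y,\partial\Omega)<r_0=r_\Omega/(1+2C_{\|\cdot\|}J_0)$, substituting into (i) and rearranging gives, after a short computation, $J(\Omega;y)>2J_0>J_0$. Consequently no minimizing sequence for $J(\Omega;\cdot)$ can eventually leave $K$, and combining compactness of $K$ with the lower semicontinuity from the first step forces the global infimum to be attained inside $K$.

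The only spot demanding genuine care is the asymmetry of the general Minkowski norm: one must track $C_{\|\cdot\|}$ consistently when passing between $\|x_\Omega-y\|$ and $\|y-x_\Omega\|$, and recall that $\ell_{\|\cdot\|}(\gamma[x_\Omega,y])\ge\|y-x_\Omega\|$ (with the correct orientation of the parametrization) rather than the reverse. Once this bookkeeping is in place, both (i) and (ii) reduce to elementary inequalities, and the lower-semicontinuity step is essentially a one-line consequence of Lemma~\ref{continuous 1}.
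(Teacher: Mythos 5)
Your proposal is correct and follows essentially the same route as the paper: lower semicontinuity as a supremum of the continuous functions $J(x,\Omega;\cdot)$ from Lemma~\ref{continuous 1}, part (i) by the triangle inequality at a near-closest boundary point with the asymmetry constant $C_{\|\cdot\|}$ absorbing the reversal of direction, and part (ii) by showing via (i) that points with $d_{\Vert\cdot\Vert}(\cdot,\partial\Omega)<r_0$ have $J(\Omega;\cdot)>2J(\Omega;x_\Omega)$ and then combining compactness of $\{d_{\Vert\cdot\Vert}(\cdot,\partial\Omega)\ge r_0\}$ with lower semicontinuity. The only cosmetic difference is that in (i) you bound $j$ at the endpoint $y$ for an arbitrary competitor curve and take the infimum directly, whereas the paper first extracts an extremal curve via Lemma~\ref{J eqiup}; both are equally valid.
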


	\begin{proof}
		Observe that for any $x\in\Omega$, $J(x,\Omega;\cdot)$ is a continuous function in $\Omega$ by Lemma~\ref{continuous 1}. Then we find that $J(\Omega;\cdot)$  is a lower-semicontinuous function in $\Omega$ since $$J(\Omega;\cdot)=\sup_{x\in\Omega}J(x,\Omega;\cdot).$$ 
		
		Now we prove (i). 
		Let $x_{\Omega}\in\Omega$ be a point satisfying 
		$$d_{\Vert \cdot\Vert}(x_{\Omega},\partial\Omega)=\max_{y\in\Omega}\{d_{\Vert \cdot\Vert}(y,\partial\Omega)\}=:r_{\Omega}.$$ 
		For any $y\in \Omega$, combining  the definition of $J(x_{\Omega},\Omega;y)$, Lemma~\ref{J eqiup} and the triangle inequality, there exists a  rectifiable curve $\gamma\subset\Omega$ joining $x_{\Omega}$ to  $y$, such that 
		
		$$J(x_{\Omega},\Omega;y)=\sup_{a\in \gamma}\frac{\ell_{\|\cdot\|}( \gamma[x_{\Omega},a])}{d_{\Vert \cdot\Vert}(a,\partial\Omega)}\ge \frac{\ell_{\|\cdot\|}( \gamma[x_{\Omega},y])}{d_{\Vert \cdot\Vert}(y,\partial\Omega)}\ge\frac{\ell_{\|\cdot\|}( \gamma[y,x_{\Omega}])}{C_{\|\cdot\|}d_{\Vert \cdot\Vert}(y,\partial\Omega)}\ge\frac{r_{\Omega}-d_{\Vert \cdot\Vert}(y,\partial\Omega)}{C_{\|\cdot\|}d_{\Vert \cdot\Vert}(y,\partial\Omega)}.$$
		Then we have 
		\begin{equation}\label{sp9}
			J(\Omega;y)\ge J(x_{\Omega},\Omega;y)\ge \frac{r_{\Omega}-d_{\Vert \cdot\Vert}(y,\partial\Omega)}{C_{\|\cdot\|}d_{\Vert \cdot\Vert}(y,\partial\Omega)}.
		\end{equation}
		
		Now we proceed to (ii). 	
		Recall that   $J(\Omega;x_{\Omega})<+\infty$ by Lemma~\ref{4c^2}. We define
		$$r_0:=\frac{r_{\Omega}}{1+2C_{\|\cdot\|}J(\Omega;x_{\Omega})} \qquad \text{and}\qquad \Omega_{r_0}:=\{x\in\Omega: d_{\Vert \cdot\Vert}(x,\partial\Omega)> r_0\}$$
		so that for any $0<r\le r_0$
		$$\frac{r_{\Omega}-r}{C_{\|\cdot\|}r}\ge \frac{r_{\Omega}-r_0}{C_{\|\cdot\|}r_0}=2J(\Omega;x_{\Omega}). $$
		Then since $x_{\Omega}\in\Omega_{r_0}$, for any $z\in \Omega\setminus\Omega_{r_0}$, we conclude from \eqref{sp9} that 
		$$J(\Omega;z)\ge 2J(\Omega;x_{\Omega}) > J(\Omega;x_{\Omega})\ge \inf_{x\in\overline{\Omega}_{r_0}}J(\Omega;x).$$ 
		Then the above estimate yields that  $\inf_{x\in\Omega}J(\Omega;x)=\inf_{x\in\overline{\Omega}_{r_0}}J(\Omega;x)$.
		
		Notice that $\overline{\Omega}_{r_0}$ is a compact set and $J(\Omega;\cdot)$ is a lower-semicontinuous function in $\Omega$. As a consequence,    
		there  exist a point $b\in\overline{\Omega}_{r_0}$ such that $$J(\Omega;b)=\inf_{x\in\overline{\Omega}_{r_0}}J(\Omega;x)=\inf_{x\in\Omega}J(\Omega;x).$$	
	\end{proof}

	%

	We further need an auxiliary lemma regarding Hausdorff convergence.

	\begin{lem}\label{keep distance}
		Suppose that $\{K_j\}_{j\in\mathbb{N}}$ is a sequence of compact sets converging to a compact set $K$ in the Hausdorff metric and the interior of $K$ is denoted as $\Omega$. Assume further that 
		$$ \inf_{j\in\mathbb{N}}\max_{x\in K_j}d_{\Vert \cdot\Vert}(x,\partial K_j)\ge r_0.$$
		Then for any  $r\in (0,r_0]$ and any converging sequence $\{x_j\}
		_{j\in\mathbb{N}}$  satisfying $x_j\in K_j$ and 
		$$d_{\Vert \cdot\Vert}(x_j,\partial K_j)\ge r, $$
		the limit $x:=\lim_{j\to \infty} x_j$ satisfies
		\begin{equation}\label{keep dis}
			x\in \Omega \text{ \  and \ } d_{\Vert \cdot\Vert}(x,\partial\Omega)\ge r.
		\end{equation}
	\end{lem}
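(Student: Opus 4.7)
\noindent\emph{Proof proposal.} My plan is to reduce both conclusions of \eqref{keep dis} to the single inclusion $B_{\Vert\cdot\Vert}(x,r)\subset\Omega$: from this $x\in\Omega$ (as the center of the ball) and $d_{\Vert\cdot\Vert}(x,\partial\Omega)\ge r$ (since no $y\in\partial\Omega$ can satisfy $\Vert x-y\Vert<r$) follow immediately.

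The first step is to show that $B_{\Vert\cdot\Vert}(x_j,r)\subset K_j$ for every $j$. This is a convexity argument: if $\Vert x_j-y\Vert<r$, the segment $\gamma(t)=(1-t)x_j+ty$ satisfies $\Vert x_j-\gamma(t)\Vert=t\Vert x_j-y\Vert<r$ by positive homogeneity, and since $d_{\Vert\cdot\Vert}(x_j,\partial K_j)\ge r>0$ makes $x_j$ interior to $K_j$, a point $y\notin K_j$ would force $\gamma$ to hit $\partial K_j$ at some $p$ with $\Vert x_j-p\Vert<r$, contradicting the hypothesis.

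The second step shows $B_{\Vert\cdot\Vert}(x,r)\subset\Omega$. Fix $z$ with $\Vert x-z\Vert<r$, set $\delta:=r-\Vert x-z\Vert$, and pick $\eta\in(0,\delta/2)$. Since $\Vert\cdot\Vert$ and $|\cdot|$ induce the same topology (related through $C_{\Vert\cdot\Vert}$), $\Vert x_j-x\Vert<\eta$ for all $j$ large. Then for every $w$ with $\Vert z-w\Vert<\eta$, the triangle inequality gives
$$\Vert x_j-w\Vert\le\Vert x_j-x\Vert+\Vert x-z\Vert+\Vert z-w\Vert<\eta+(r-\delta)+\eta<r,$$
so $w\in B_{\Vert\cdot\Vert}(x_j,r)\subset K_j$ by the first step. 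Hausdorff convergence $K_j\to K$ then yields $d(w,K)\le d_H(K_j,K)\to 0$, and closedness of $K$ forces $w\in K$. Thus $B_{\Vert\cdot\Vert}(z,\eta)\subset K$, so $z\in\mathrm{int}(K)=\Omega$.

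The main concern is bookkeeping due to the possible asymmetry of the Minkowski norm: the triangle inequalities must be applied in the correct direction, and the equivalence of $\Vert x_j-x\Vert\to 0$ with $\Vert x-x_j\Vert\to 0$ is invoked through $C_{\Vert\cdot\Vert}$. Beyond that, the proof is just the convexity observation of the first step combined with a triangle inequality and the basic Hausdorff-convergence principle in the second; the hypothesis $\inf_j\max_{K_j}d_{\Vert\cdot\Vert}(\cdot,\partial K_j)\ge r_0$ ensures the setting is non-vacuous but does not enter the estimates themselves.
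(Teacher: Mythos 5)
Your proof is correct and follows essentially the same route as the paper: both arguments reduce the two conclusions to showing that the $\Vert\cdot\Vert$-ball of radius (nearly) $r$ about $x$ is contained in $K$, using the lower bound $d_{\Vert\cdot\Vert}(x_j,\partial K_j)\ge r$ together with the Hausdorff convergence $K_j\to K$. The only cosmetic difference is that you justify $B_{\Vert\cdot\Vert}(x_j,r)\subset K_j$ explicitly by a segment/convexity argument and then use the one-sided bound $\sup_{a\in K_j}d(a,K)\le d_H(K_j,K)$ directly, whereas the paper works with distances to complements and the representation $K=\bigcap_{m}Cl\big(\bigcup_{j\ge m}K_j\big)$.
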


	\begin{proof} 
		
		As  $K_j$ converge to $K$ in the Hausdorff metric by our assumption,  we claim that  $K$ can be explicitly represented as 
		\begin{equation}\label{Hausdorff limit}
			K=\bigcap_{m=1}^{+\infty}Cl\Big(\bigcup_{j=m}^{+\infty}K_j\Big).
		\end{equation}
		This conclusion can be found in 
		\cite[Exercise 7.3.4]{BBI2022}.

		Now by  \eqref{Hausdorff limit} and the convergence of $x_j$, we have
		\begin{equation}\label{sp14}
			\{x\}=\bigcap_{m=1}^{+\infty}Cl\Big({\bigcup_{j=m}^{+\infty}\{x_j\}}\Big)\subset\bigcap_{m=1}^{+\infty}Cl\Big({\bigcup_{j=m}^{+\infty}K_j}\Big)= K.
		\end{equation}
		Choosing $\ez>0$ sufficiently small and for any $r\in (0,r_0]$,  there is $j_0\in \mathbb{N}$, such that 
		$$\Vert x_j-x\Vert<\ez$$
		for any $ j\ge j_0$. Thus, we get 
		$$d_{\Vert \cdot\Vert}(x,K_j^c)\ge d_{\Vert \cdot\Vert}(x_j,K_j^c)-\Vert x_j-x\Vert\ge r-\ez \qquad\forall j\ge j_0.$$
		The estimate above yields that 
		$$d_{\Vert \cdot\Vert}\Big(x,\Big(Cl\big({\bigcup_{j=m}^{+\infty}K_j}\big)\Big)^c\Big)\ge d_{\Vert \cdot\Vert}\Big(x,\big(\bigcup_{j=m}^{+\infty}K_j\big)^c\Big)\ge r-\ez
		\qquad  \forall m\ge j_0.$$
		
		Let $m\to+\infty $, then we have
		$$d_{\Vert \cdot\Vert}(x,K^c)\ge r-\ez.$$
		Further let $\ez\to 0$, from the above estimate and  (\ref{sp14}) we get (\ref{keep dis}). 
	\end{proof}
	
	Now we are ready to show Theorem \ref{low}.  
	
	\begin{proof}[Proof of Theorem \ref{low}]
		Assume that 	$$J:=\liminf_{j\to+\infty}John(\Omega_j)\le J_0<\infty.$$
		Let 
		$$\Omega_{j,r}:=\{x\in \Omega_j: d_{\Vert\cdot\Vert}(x,\partial \Omega_j)\ge r\} \qquad \text{and}\qquad \Omega_{r}:=\{x\in \Omega: d_{\Vert\cdot\Vert}(x,\partial \Omega)\ge r\}$$
		for some $r>0$ to be determined. 
		Further let $\{\Omega_j\}_{j\in\mathbb{N}^+}$ be a minimizing sequence and $x_{\Omega_j}\in \Omega_j$ be a point  satisfying 
		$$d_{\Vert \cdot\Vert}(x_{\Omega_j},\partial\Omega_j)=\max_{x\in\Omega_j}d_{\Vert \cdot\Vert}(x,\partial\Omega_j)=:r_{\Omega_j}.$$ 
		
		On the other hand, by Lemma~\ref{continuous 2},  for each $i\in\mathbb{N}$ there exists a (center) point $x_{j}\in\Omega_{j,r}$, such that 
		$$
		J(\Omega_j;x_{j})=John(\Omega_j).
		$$
		We remark that $x_j$ might not be $x_{\Omega_j}$. Nevertheless, by Lemma \ref{4c^2} we have
		\begin{equation}\label{sp10}
			J(\Omega_j;x_{\Omega_j})\le C(n,C_{\|\cdot\|},J_0).
		\end{equation}

		In addition, since $(\mathcal{C}^n,d_H)$ is complete and bounded subsets are precompact,  up to passing to a subsequence,   $\{\overline{\Omega}_i\}_{i\in\mathbb{N}}$ converges in the Hausdorff metric to a compact set $A$. We set the interior of $A$ as $\Omega$. 
		
		\noindent{\bf Step 1: $r_{\Omega_j}$ is uniformly bounded away from $0$.} To this end, for each $j\in\mathbb{N}^+$, the definition of $J(\Omega_{j};x_{\Omega_{j}})$ and Lemma \ref{J eqiup} tell that, for any $x\in\Omega_{j}\setminus\{x_{\Omega_{j}}\}$, there exists a rectifiable curve $\beta_{j}$ joining $x$ to $x_{\Omega_{j}}$, such that 
		$$\sup_{a\in 
 [0,1]}\frac{\ell_{\|\cdot\|}(\beta_j([0,a]))}{d_{\Vert \cdot\Vert}(\beta_j(a),\partial\Omega_{j})}=J(x,\Omega_{j};x_{\Omega_{j}})\le J(\Omega_{j};x_{\Omega_{
				j}})$$
		and thus by \eqref{sp10}
		\begin{multline*}
			\ell_{\|\cdot\|}(\beta_j[x,x_{\Omega_j}])\le \sup_{a\in [0,1]}\frac{\ell_{\|\cdot\|}(\beta_j([0,a]))}{d_{\Vert \cdot\Vert}(\beta_j(a),\partial\Omega_{j})}d_{\Vert \cdot\Vert}(x_{\Omega_{j}},\partial\Omega_{j})\\ 
			\le J(\Omega_{j};x_{\Omega_{j}})d_{\Vert \cdot\Vert}(x_{\Omega_{j}},\partial\Omega_{j})\le C(n,C_{\|\cdot\|},\, J_0) r_{\Omega_{j}}.
		\end{multline*}
		This yields that 
		$ \Omega_{j}\subset B_{\Vert \cdot\Vert}(x_{\Omega_{j}}, C(n,C_{\|\cdot\|},\, J_0)r_{\Omega_{j}}),$
		from which  we conclude
		$$c_0\vert B_{\Vert \cdot\Vert}(0,1)\vert\le \vert \Omega_{j}\vert\le \vert B_{\Vert \cdot\Vert}(0,1)\vert( C(n,C_{\|\cdot\|},\, J_0) r_{\Omega_{j}})^n.$$
		As a result, we conclude that
		\begin{equation}\label{infimum distance keep}
			r_{\Omega_j}\ge c
		\end{equation}
		for some $c=c(n,\, C_{\|\cdot\|}, \,J_0,\,c_0)>0.$
		
		\noindent{\bf Step 2: $x_j$  is uniformly away from the boundary.} 	
		Up to further extracting a subsequence, we may assume 
		$$x_\Omega:=\lim_{j\to+\infty}x_{\Omega_j}\in \mathbb R^n. $$
		Recalling Lemma \ref{keep distance} and  (\ref{infimum distance keep}), it follows  that 
		$$\max_{x\in\Omega}d_{\Vert \cdot\Vert}(x,\partial\Omega)\ge d_{\Vert \cdot\Vert}(x_\Omega,\partial\Omega)\ge c>0.$$
		In addition, we can choose $r>0$ so that
		$$ r\le \inf_{j\in\mathbb{N}}\frac{r_{\Omega_j}}{1+2C_{\|\cdot\|}J(\Omega;x_{\Omega_j})},$$ 
		where its existence is ensured     by \eqref{infimum distance keep}, \eqref{sp10} and Lemma~\ref{continuous 2}. 
		
		Recall that $x_j\in\Omega_{j,r}$. Then up to further passing to a subsequence,  we may assume that the limit $x $ of $\{x_j\}_{j\in\mathbb{N}}$ exists, and Lemma~\ref{keep distance}  implies that $x\in\Omega_{r}$ .

		\noindent{\bf Step 3: Lower semicontinuity of  $John(\Omega_j)$.} 	
		Let $J_i:=John(\Omega_j)$.  Note that for any $y\in\Omega$, Hausdorff convergence yields the existence of a sequence $y_j\in\overline{\Omega}_j$ such that $\lim_{j\to+\infty} y_j=y$. Combining the definition of $John(\Omega_j)$, Lemma~\ref{J eqiup} and Lemma~\ref{continuous 2}, for each $j\in\mathbb N$ we obtain a rectifiable curve $\gamma_j\subset\Omega_j$ joining $y_j$ to $x_j$ such that  the corresponding $J_i$-carrot $car(\gamma_j,J_j)$ satisfies 
		$$car(\gamma_j,J_j)\subset\Omega_j,$$
		which, due to the fact that $\Omega_j$ is uniformly bounded and  $J_j\le J_0$ for each $j\in\mathbb N^+$, yields that $\ell_{\|\cdot\|}(\gamma_j[y_j,x_j])\le J_0 \max_{j\in\mathbb N^+}r_{\Omega_j}.$ 
		Then by Lemma \ref{carrot convergent}, there exists a rectifiable curve  $\gamma\subset \Omega$  joining $y$ to $x$ so that the $J$-carrot $car(\gamma,J)$  (as a Euclidean open set) satisfies
		\begin{equation}\label{sp16}  car(\gamma,J)\subset\bigcap_{m=1}^{+\infty}\Big({\bigcup_{j=m}^{+\infty}car(\gamma_j,J_j)}\Big)\subset\bigcap_{m=1}^{+\infty}Cl\Big({\bigcup_{j=m}^{+\infty}\overline{\Omega}_j}\Big)= A.
		\end{equation}
		As $\Omega$ is the interior of $A$, then from (\ref{sp16}) we have  $car(\gamma,J)\subset \Omega$,  which implies that 
		\begin{equation}\label{john at y}
			J(y,\Omega;x )\le J.
		\end{equation}
		To conclude, each $y\in \Omega$ can be joint from $x $ by a rectifiable curve inside $\Omega$, which implies that $\Omega$ is connected. Furthermore, the arbitrariness of $y$ in \eqref{john at y} yields
		$$John(\Omega)\le J(\Omega;x )\le J= \liminf_{j\to+\infty}John(\Omega_j).$$
		We complete the second part of the proof. 
	\end{proof}

	\section{John component of unbounded carrot John domain}\label{unbdd carrot John}

	In this section, we consider the John domain defined via the standard Euclidean norm $|\cdot|$. 
	
	The proof of  Theorem \ref{John component} is rather technical since most of the sets defined in question are open. 
	To obtain the sets $W_{j,\infty}$ as mentioned in  Theorem \ref{John component},  we initially decompose  $B_R\setminus K$  into  at most $C(n,\, J)$-many sets $\{V_{j,\,R}\}$. Subsequently, for each  $y\in V_{j,R}$,   we create a bounded $J'$-carrot John domain $\Omega_{j,R,y}$ (refer to Proposition~\ref{omega cons}). In the proof of Theorem~\ref{John component}, we choose sequences of points $x_{j,r}\in V_{j,R}$ along with the corresponding John curves $\gamma_{x_{j,r}}$ extending from $x_{j,r}$ towards $\infty$, where $r$ is a positive number with 
	$r\le R$.   This selection ensures that we can obtain sets
	$$W_{j,R}=\Omega_{j,\,R,x_{j,r}}\subset B_{C'R}, \quad C'=C'(n,\,J).$$
	In particular, by eventually choosing $x_j\in \mathbb R^n$ suitably, 
	$$W_{j,\infty}:=\bigcup_{R>|x_j|} W_{j,R}\subset \mathbb{R}^n\setminus K$$
	fulfills the condition that for every pair of distinct points $z,w\in W_{j,\infty}$, there exists a point $a\in \gamma_{x_j}$ to which both $z$ and $w$ can be connected by $\hat \gamma_z$ and $\hat \gamma_w$, respectively. Moreover, these connecting curves satisfy the properties \eqref{Boman chain} and \eqref{cigar ball}.

	Prior to identifying the desired bounded John domain $\Omega_{j,R,y}$, we rely on the following proposition. While this technique has been commonly employed in previous manuscripts, such as \cite{NV1991}, it has not been explicitly formulated, to the best of our knowledge, in the context of our present work.

	\begin{prop}\label{length car pro}
		Let $J\ge 1$. Assume that $\gamma \subset\mathbb{R}^n$ is a locally rectifiable curve joining $x$ to $y$, where $x\in \mathbb{R}^n$ and $y\in \dot{\mathbb{R}}^n$ ($y$ may be $\infty$). Then
		$ car(\gamma,J)$ is a $J$-carrot John domain. 
		
		To be more specific,  for any $z\in car(\gamma,J)$, we can find a rectifiable curve $\gamma_z$ joining $z$ to $y$, such that for some $\eta\in \gamma$, we have
		$$\gamma[\eta,y]=\gamma_z[\eta,y]$$
		and for each $a\in \gamma[\eta,y]\setminus \{\infty\}$,  
		\begin{equation}\label{length}
			\ell(\gamma_z[z,a])\le  \ell(\gamma[x,a]),\quad car(\gamma_z,J)\subset car(\gamma,J).
		\end{equation}
	\end{prop}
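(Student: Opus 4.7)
The plan is to construct the required curve $\gamma_z$ by simply concatenating a straight line segment from $z$ to a suitable hitting point $\eta\in\gamma$ with the tail $\gamma[\eta,y]$ of the original curve, and then to verify both the length inequality and the carrot inclusion by direct estimation. The whole argument rests on the fact that in the definition of $car(\gamma,J)$, the radius $\ell(\gamma[x,\eta])/J$ of the ball centered at $\eta$ is already strictly larger than $|z-\eta|$, which gives us enough slack to bend the curve at $\eta$ without escaping the original carrot.

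More concretely, given $z\in car(\gamma,J)$, I would first invoke the definition to pick some $\eta\in\gamma\setminus\{x\}$ (also $\eta\ne\infty$ when $y=\infty$) with $|z-\eta|<\ell(\gamma[x,\eta])/J$, and then set $\gamma_z:=L_{z,\eta}\cup\gamma[\eta,y]$, where $L_{z,\eta}$ is the Euclidean segment. By construction $\gamma_z[\eta,y]=\gamma[\eta,y]$. For any $a\in\gamma[\eta,y]\setminus\{\infty\}$ one then computes
\begin{equation*}
\ell(\gamma_z[z,a])=|z-\eta|+\ell(\gamma[\eta,a])<\frac{\ell(\gamma[x,\eta])}{J}+\ell(\gamma[\eta,a])\le\ell(\gamma[x,\eta])+\ell(\gamma[\eta,a])=\ell(\gamma[x,a]),
\end{equation*}
where the last inequality uses $J\ge 1$. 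This settles the length bound in \eqref{length}.

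The main obstacle, and the only delicate point, is the inclusion $car(\gamma_z,J)\subset car(\gamma,J)$. For a center $w$ lying on the tail piece $\gamma[\eta,y]$ the estimate above gives $\ell(\gamma_z[z,w])\le\ell(\gamma[x,w])$, so the corresponding ball $B(w,\ell(\gamma_z[z,w])/J)$ is contained in $B(w,\ell(\gamma[x,w])/J)\subset car(\gamma,J)$. The less obvious case is $w\in L_{z,\eta}\setminus\{z\}$: write $w=z+t(\eta-z)$ with $t\in(0,1]$, so that $\ell(\gamma_z[z,w])=t|z-\eta|$ and $|w-\eta|=(1-t)|z-\eta|$. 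Then for any $p\in B(w,t|z-\eta|/J)$ the triangle inequality yields
\begin{equation*}
|p-\eta|\le|p-w|+|w-\eta|<\frac{t|z-\eta|}{J}+(1-t)|z-\eta|=|z-\eta|\Bigl[1-t\Bigl(1-\frac{1}{J}\Bigr)\Bigr]\le|z-\eta|<\frac{\ell(\gamma[x,\eta])}{J},
\end{equation*}
so $p\in B(\eta,\ell(\gamma[x,\eta])/J)\subset car(\gamma,J)$. This gives the inclusion $car(\gamma_z,J)\subset car(\gamma,J)$, and the first assertion of the proposition, that $car(\gamma,J)$ is a $J$-carrot John domain with center $y$, follows because every $z\in car(\gamma,J)$ is now equipped with such a curve $\gamma_z$ toward $y$ whose $J$-carrot lies inside. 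Connectedness of $car(\gamma,J)$ is immediate from the fact that $\gamma\setminus\{x\}\subset car(\gamma,J)$ and each constituent ball contains its center on this connected set.
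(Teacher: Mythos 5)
Your proposal is correct and follows essentially the same route as the paper: the same curve $\gamma_z=L_{z,\eta}\cup\gamma[\eta,y]$ and the same length estimate $\ell(\gamma_z[z,a])\le \ell(\gamma[x,\eta])/J+\ell(\gamma[\eta,a])\le\ell(\gamma[x,a])$. You merely spell out the inclusion $car(\gamma_z,J)\subset car(\gamma,J)$ (in particular the case of centers on $L_{z,\eta}$, absorbed into $B(\eta,\ell(\gamma[x,\eta])/J)$), which the paper asserts as following directly from the construction.
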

	\begin{proof}
		For any $z\in car (\gamma,J)$, the definition of $car (\gamma,J)$ yields a ball 
		$$B(\eta,\ell(\gamma[x,\eta])/J)\subset car (\gamma,J)$$
		for some points $\eta\in \gamma\setminus \{x\}$ so that $z\in B(\eta,\ell(\gamma[x,\eta])/J)$.
		
		Let $L_{z,\eta}$ be the line segment joining $z$ to $\eta$ and then $\gamma_z:=L_{z,\eta}\cup \gamma[\eta,y]$ is a locally  rectifiable curve joining $z$ to $y$. When $a\in L_{z,\eta}$,  
		\begin{equation}\label{a41}
			\ell(\gamma_z[z,a])\le d\left(a,\partial B(\eta,\ell(\gamma[x,\eta])/J)\right)\le \ell(\gamma[x,\eta])/J.
		\end{equation}
		When $a\in \gamma[\eta,y]$, by applying \eqref{a41} with $a=\eta$ there, 		we have 
		\begin{align*} 
			\ell(\gamma_z[z,a])&\le \ell(\gamma_z[z,\eta])+\ell(\gamma_z[\eta,a])\le \frac{\ell(\gamma[x,\eta])}{J}+\ell(\gamma[\eta,a])\nonumber\\
			& \le \ell(\gamma[x,\eta])+\ell(\gamma[\eta,a])=\ell(\gamma[x,a]).
		\end{align*}
		To conclude, we obtain that
		$$\ell(\gamma_z[z,a])\le  \ell(\gamma[x,a]),$$
		which is the first formula of \eqref{length}. 
		The second one follows directly from our construction of $car(\gamma_z, J)$ and $car(\gamma, J)$, and we conclude the lemma. 
	\end{proof}
	
	\subsection{A decomposition $V_{j,\,R}$ of $B_R\setminus K$}
	
	Now  for any $x\in\mathbb{R}^n\setminus K$, 
	we choose a John curve $\gamma_x\subset \mathbb{R}^n\setminus K$ joining $x$ towards $\infty$ with $car(\gamma,J)\subset \mathbb R^n\setminus K$.
	Although there could be many choices of curves for $x\in\mathbb{R}^n\setminus K$, we just  choose one of them. Let $\Gamma=\{\gamma_x\}_{x\in \mathbb{R}^n\setminus K }$ be the collection of these chosen curves. In what follows, for any points $x\in \mathbb{R}^n\setminus K$, $\gamma_x$ always refers to this particular choice of John curve.
	
	Note that for any $R> 0$, we have  $B_{R}\cap K \neq \emptyset$ as $0\in K$. Our first step is to decompose $B_R\setminus K$ into finitely many subsets $V_{j,R}$ so that, there exists a collection $\mathcal{B}_{j,R}$ of at most $C(n,\, J)$-many balls, whose centers are on $\partial B_{3R}$ and whose radii at least $J^{-1}R$, satisfying that, 
	for  any $x\in V_{j,R}$, we can find a ball $B\in\mathcal{B}_{j,R}$ with
	$$\gamma_x\cap B\neq\emptyset.$$

	To this end, observe that for each $x\in B_R\setminus K$ and $\gamma_x\in \Gamma$, there exists a point
	\begin{equation}\label{xR def}
		x_R\in\gamma_x\cap\partial B_{3R}  
	\end{equation}
	so that 
	\begin{equation}\label{lower size}
		2R\le\ell(\gamma[x,x_R])\le Jd(x_R,K).
	\end{equation}
	
	Consider the collection of closed balls 
	\begin{equation}\label{ball collect}
		\{\overline{B}_x\}_{x\in B_R\setminus K}:=\left\{\overline{B}\left(x_R,\frac{d(x_R,K)}{2}\right)\right\}_{x\in B_R\setminus K}.
	\end{equation}
	Then thanks to \eqref{lower size} and $0\in K$, we  obtain that 
	\begin{equation}\label{radius larg}
		\frac{R}{J}\le \frac{d(x_R,K)}{2}\le 2R,
	\end{equation}
	and hence  $B_x\cap B_R=\emptyset$. 
	
	We next let 
	$$A_R:=\bigcup_{x\in \overline{B_R}\setminus K}\{x_R\}$$
	be the collection of the centers of $B_x$'s. By Bescovitch's covering theorem, there exists a subcollection  $\{\overline{B}_i\}_{i\in\mathbb{N}}$ of $\{\overline{B}_x\}_{x\in B_R\setminus K}$ consisting of at most countably many balls, such that 
    \begin{equation}\label{bcover}
        \chi_{A_R}(z)\le \sum_{B_i} \chi_{\overline{B}_i}(z)\le C(n) \qquad  \forall z\in \mathbb{R}^n\setminus K; 
    \end{equation}
	see Figure~\ref{fig:K}.

	\begin{figure}[ht]
		\centering
		\def\svgwidth{\columnwidth}
		\resizebox{0.7\textwidth}{!}{
\begingroup%
  \makeatletter%
  \providecommand\color[2][]{%
    \errmessage{(Inkscape) Color is used for the text in Inkscape, but the package 'color.sty' is not loaded}%
    \renewcommand\color[2][]{}%
  }%
  \providecommand\transparent[1]{%
    \errmessage{(Inkscape) Transparency is used (non-zero) for the text in Inkscape, but the package 'transparent.sty' is not loaded}%
    \renewcommand\transparent[1]{}%
  }%
  \providecommand\rotatebox[2]{#2}%
  \newcommand*\fsize{\dimexpr\f@size pt\relax}%
  \newcommand*\lineheight[1]{\fontsize{\fsize}{#1\fsize}\selectfont}%
  \ifx\svgwidth\undefined%
    \setlength{\unitlength}{209.76377106bp}%
    \ifx\svgscale\undefined%
      \relax%
    \else%
      \setlength{\unitlength}{\unitlength * \real{\svgscale}}%
    \fi%
  \else%
    \setlength{\unitlength}{\svgwidth}%
  \fi%
  \global\let\svgwidth\undefined%
  \global\let\svgscale\undefined%
  \makeatother%
  \begin{picture}(1,0.53631759)%
    \lineheight{1}%
    \setlength\tabcolsep{0pt}%
    \put(0,0){\includegraphics[width=\unitlength,page=1]{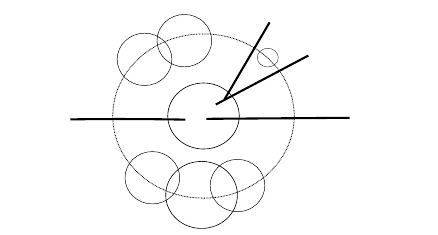}}%
    \put(0.7150901,0.22793499){\makebox(0,0)[lt]{\lineheight{1.25}\smash{\begin{tabular}[t]{l}$K$\end{tabular}}}}%
    \put(0.4710656,0.21452705){\makebox(0,0)[lt]{\lineheight{1.25}\smash{\begin{tabular}[t]{l}$B_R$\end{tabular}}}}%
    \put(0.67129083,0.35665121){\makebox(0,0)[lt]{\lineheight{1.25}\smash{\begin{tabular}[t]{l}$\partial B_{3R}$\end{tabular}}}}%
    \put(0,0){\includegraphics[width=\unitlength,page=2]{K.pdf}}%
  \end{picture}%
\endgroup%
}
		\caption{The set $K$ is the union of black lines. We apply Bescovitch's covering theorem to cover the set $A_R$ with balls centered at $\partial B_{3R}$. }
		\label{fig:K}
	\end{figure}
	
	
	Recall that by \eqref{radius larg}
	$$B_i\subset B_{5R}\setminus \overline{B}_{R}$$
	and
	$|B_{i}|\ge c(n,\,J)R^n. $
	Thus we have at most $C(n,\,J)$-many elements in $\{\overline{B_i}\}$ by \eqref{bcover}. 
	As a result, the union of balls 
	$$\bigcup_{i}\overline{B}_i$$
	has at most $\hat N=\hat{N}(n,\,J)$ components $U_{j,R}$
	for $j\in\{1,\cdots,N_R\}$ and 
	$$N_R\le \hat N=\hat N(n,\,J);$$
	By defining $U_{j,\,R}$ to be empty for $j> N_R$, we may assume that there exist exactly $\hat N$ components $U_{j,R}$, and each $U_{j,R}$ contains at most $\hat{N}$ balls. 
	We write 
\begin{equation}\label{B collection}
    \mathcal {B}_{j,R}=\{B_i: B_i\subset U_{j,R}\}
\end{equation}
	
	Now it follows from our construction, for any $x\in B_R\setminus K$, there exists some $1\le j\le N_R$ so that, $x_R\in \gamma_x$ is covered by a ball in $\mathcal{B}_{j,R}$. Thus, by  defining 
	\begin{equation}\label{VR}
		V_{j,R}:=\{x\in B_R\setminus K:x_R\in D \text{ for some }D\in\mathcal{B}_{j,R}\},
	\end{equation}
	we obtain the desired decomposition of $B_R\setminus K$. The set $V_{j,\,R}$ is defined to be empty if $U_{j,\,R}$ is empty.

	\subsection{Construction of $\Omega_{j,\,R,\,y}$}	
	Given $R>0$ and $j\in\{1,\cdots,N_R\}$, recall the construction of $\mathcal{B}_{j,R}$ and $V_{j,R}$ in the last subsection. Then for each point $y\in V_{j,R}$ we set up a bounded $J'$-carrot John domain $\Omega_{j,R,y}$ with John center  $y_R$, where $J'=J'(n,\,J)$, such that 
	$$V_{j,R}\subset \overline{\Omega}_{j,R,y}\quad \text{and}\quad \Omega_{j,R,y}\subset (\mathbb{R}^n\setminus K)\cap B_{C'R}$$
	where $C'=C'(n,\,J)$; see Figure~\ref{fig:O}. We formulate it as the following proposition. 
	
	\begin{figure}[ht]
		\centering
		\def\svgwidth{\columnwidth}
		
		\resizebox{0.7\textwidth}{!}{
\begingroup%
  \makeatletter%
  \providecommand\color[2][]{%
    \errmessage{(Inkscape) Color is used for the text in Inkscape, but the package 'color.sty' is not loaded}%
    \renewcommand\color[2][]{}%
  }%
  \providecommand\transparent[1]{%
    \errmessage{(Inkscape) Transparency is used (non-zero) for the text in Inkscape, but the package 'transparent.sty' is not loaded}%
    \renewcommand\transparent[1]{}%
  }%
  \providecommand\rotatebox[2]{#2}%
  \newcommand*\fsize{\dimexpr\f@size pt\relax}%
  \newcommand*\lineheight[1]{\fontsize{\fsize}{#1\fsize}\selectfont}%
  \ifx\svgwidth\undefined%
    \setlength{\unitlength}{209.76377106bp}%
    \ifx\svgscale\undefined%
      \relax%
    \else%
      \setlength{\unitlength}{\unitlength * \real{\svgscale}}%
    \fi%
  \else%
    \setlength{\unitlength}{\svgwidth}%
  \fi%
  \global\let\svgwidth\undefined%
  \global\let\svgscale\undefined%
  \makeatother%
  \begin{picture}(1,0.53631759)%
    \lineheight{1}%
    \setlength\tabcolsep{0pt}%
    \put(0,0){\includegraphics[width=\unitlength,page=1]{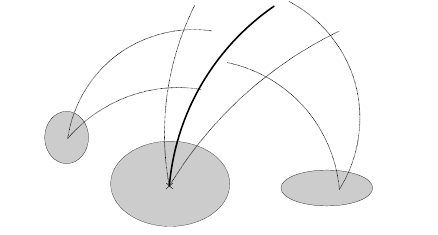}}%
    \put(0.40545392,0.08672564){\makebox(0,0)[lt]{\lineheight{1.25}\smash{\begin{tabular}[t]{l}$y$\end{tabular}}}}%
    \put(0.44240672,0.29099229){\makebox(0,0)[lt]{\lineheight{1.25}\smash{\begin{tabular}[t]{l}$\gamma_y[y,\,y_R]$\end{tabular}}}}%
    \put(0.79756377,0.10930786){\makebox(0,0)[lt]{\lineheight{1.25}\smash{\begin{tabular}[t]{l}$V_{j,\,R}$\end{tabular}}}}%
    \put(0,0){\includegraphics[width=\unitlength,page=2]{O.pdf}}%
  \end{picture}%
\endgroup%
}
		\caption{The set $V_{j,\,R}$ may not necessarily be connected. We connect each point in $V_{j,\,R}$ to the curve $\gamma_y[y,y_R]$ using appropriate curves. Subsequently, we take the union of the carrots surrounding these curves to form $\Omega_{j,\,R,\,y}. $} 
		\label{fig:O}
	\end{figure}
	
	\begin{prop}\label{omega cons}
		For fixed $y\in V_{j,\,R}$ and $1\le j\le \hat N$ with $\hat N=\hat N(n,\,J)$ defined above, the set 	\begin{equation}\label{Omega def}
			\Omega_{j,R,y}:=car(\gamma_y[y,y_R],J)\cup \bigcup_{z\in V_{j,R}}car(\beta_z,J').
		\end{equation}
		is a $J'$-carrot John domain with John center $y_R$, where $J'=J'(n,\, J)$ and  $\beta_z$ is a rectifiable curve joining $z$ to $y_R$ satisfying $\gamma_{z}[z,z_R]\subset \beta_z$; recall that $\gamma_x$ is a chosen curve joining $x$ toward $\infty$.
        
        Moreover, there exists $C_1=C_1(n,\, J)\ge 4$ so that, the curve $\beta_z$ joining $z\in V_{j,R}$ to $y_R$ that is the core of a $J'$-carrot satisfying 
		$$\ell(\beta_z)\le C_1 R$$
		and
		\begin{equation}\label{close omega}
			V_{j,R}\subset \overline{\Omega}_{j,R,y},\quad car(\beta_z,J')\subset\Omega_{j,R,y}\subset (\mathbb{R}^n\setminus K)\cap B_{2C_1R}.
		\end{equation}
	\end{prop}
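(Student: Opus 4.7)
The plan is to construct, for each $z\in V_{j,R}$, the curve $\beta_z$ as a concatenation of two pieces — the original John curve $\gamma_z[z,z_R]$ followed by a polygonal chain through overlapping balls of $\mathcal B_{j,R}$ landing at $y_R$ — and then to extend the $J'$-carrot John property to arbitrary points of $\Omega_{j,R,y}$ by invoking Proposition~\ref{length car pro}. By the definition \eqref{VR}, both $z_R$ and $y_R$ lie in balls of $\mathcal B_{j,R}$, say $z_R\in B^z$ and $y_R\in B^y$. Since $U_{j,R}$ is a connected finite union of at most $\hat N(n,J)$ such balls, I can select a chain $B^y=B_0,B_1,\ldots,B_m=B^z$ with $m\le \hat N$ and $B_{i-1}\cap B_i\neq\emptyset$; fixing $p_i\in B_{i-1}\cap B_i$, the polygonal path $z_R\to p_m\to\cdots\to p_1\to y_R$ has each segment contained in a single convex ball $B_i$.

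The length bound follows from \eqref{lower size} and \eqref{radius larg}: $\ell(\gamma_z[z,z_R])\le J\, d(z_R,K)\le 4JR$, while each polygonal segment has length at most the diameter $\le 4R$ of its enclosing ball, with at most $\hat N+1$ segments, giving $\ell(\beta_z)\le C_1 R$ for some $C_1=C_1(n,J)$. To verify $car(\beta_z,J')\subset\mathbb R^n\setminus K$ I split along the concatenation: on $\gamma_z[z,z_R]$ the original John property of $\gamma_z$ gives $B(\eta,\ell(\gamma_z[z,\eta])/J)\subset\mathbb R^n\setminus K$, so any $J'\ge J$ suffices. On the polygonal portion, each $\eta$ lies in some $B_i=B(c_i,r_i)$ with $r_i\ge R/J$ and, by \eqref{ball collect}, $d(c_i,K)=2r_i$; hence $d(\eta,K)\ge r_i\ge R/J$, and the required inclusion $B(\eta,\ell(\beta_z[z,\eta])/J')\subset\mathbb R^n\setminus K$ reduces to $C_1R/J'\le R/J$, i.e., $J'\ge C_1J$. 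Setting $J':=C_1J$ handles both regimes.

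For a general $z\in\Omega_{j,R,y}$, by the definition of $\Omega_{j,R,y}$ we have $z\in car(\beta_w,J')$ for some $w\in V_{j,R}$ (the case $z\in car(\gamma_y[y,y_R],J)$ is analogous). Proposition~\ref{length car pro} applied to $\beta_w$ at $z$ yields a rectifiable curve joining $z$ to $y_R$, which I again denote $\beta_z$, with $\ell(\beta_z)\le\ell(\beta_w)\le C_1 R$ and $car(\beta_z,J')\subset car(\beta_w,J')\subset\Omega_{j,R,y}$, establishing the $J'$-carrot John condition at $y_R$. Connectedness of $\Omega_{j,R,y}$ follows because each summand of the union contains $y_R$ in its interior. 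The inclusion $V_{j,R}\subset\overline\Omega_{j,R,y}$ is immediate since each $z\in V_{j,R}$ is the vertex of $car(\beta_z,J')$. The outer bound $\Omega_{j,R,y}\subset B_{2C_1R}$ follows from $|y_R|=3R$ together with $|\eta-y_R|\le \ell(\beta_z)/J'+\ell(\beta_z)\le 2C_1R$ for $\eta\in car(\beta_z,J')$, after enlarging $C_1$ to absorb the additive $3R$.

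The main technical subtlety will be at the transition point $z_R$ between the original John curve and the polygonal chain: one must simultaneously inherit the linear radius growth $\ell(\gamma_z[z,\eta])/J$ from $\gamma_z$ on the near portion and exploit the uniform ambient clearance $R/J$ supplied by the Besicovitch balls on the far portion, and these two regimes are reconciled through the single quantitative choice $J'=C_1 J$.
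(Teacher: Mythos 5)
Your proposal is correct and follows essentially the same route as the paper's proof: concatenate $\gamma_z[z,z_R]$ with a broken line through the chain of Besicovitch balls of $\mathcal B_{j,R}$, bound $\ell(\beta_z)\le C_1R$ via \eqref{lower size} and \eqref{radius larg}, take $J'=C_1J$ so that the original John property handles the first piece and the clearance $d(\eta,K)\ge R/J$ handles the chain, and then upgrade from points of $V_{j,R}$ to all of $\Omega_{j,R,y}$ via Proposition~\ref{length car pro}. The only cosmetic difference is that you route the broken line through intersection points of consecutive balls rather than through their centers, and you spell out the connectedness and the $B_{2C_1R}$ bound (with a harmless enlargement of $C_1$) which the paper leaves implicit.
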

	\begin{proof}
		Suppose that $ V_{j,R}$ is non-empty, and fix $y\in V_{j,R}$. Then the corresponding point $y_R\in \gamma_y\cap \partial B_{3R}$ is covered by some ball $D_1\in \mathcal{B}_{j,R}$ according to \eqref{B collection} and \eqref{VR}. Then we join the center $\hat{x}_1$ of $D_1$ to $y_R$ by a line segment $L_{\hat{x}_1,y_R}\subset D_1$. 
		
		Now for any $z\in V_{j,R}$, we claim that there exists  a rectifiable curve $\beta_z\subset \mathbb R^n\setminus K$ as the core of a $J'$-carrot joining $z$ to $y_R$, such that $\gamma_{z}[z,z_R]\subset \beta_z$ and
		\begin{equation}\label{base point ball}
			car(\beta_{z},J')\subset \mathbb{R}^n\setminus K.
		\end{equation}
		
		Indeed, the point $z_R\in \gamma_z\cap \partial B_{3R}$ is also covered by another ball $D_2\in\mathcal{B}_{j,R}$ as $z\in V_{j,R}$. Likewise, we join $z_R$  to the center $\hat{x}_2$ of $D_2$ by the line segment $L_{z_R,\hat{x}_2}\subset D_2$.

		Recall that $U_{j,R}$ is connected and consists of at most $\hat{N}$-many  balls from $\mathcal{B}_{j,R}$, where $\hat N = \hat N(n,\,J)$.  
		This implies that $\hat{x}_1$ and $\hat{x}_2$ can be joined by a union of at most $\hat N$-many line segments with the endpoints being the centers of balls in  $\mathcal{B}_{j,R}$. Therefore, combining with $L_{z_R,\hat{x}_2}$ and $L_{\hat{x}_1,y_R}$, we can join $z_R$ to $y_R$ by a polyline $\gamma_{z_R,y_R}$.  
		
		We show that 
		$$\beta_z:=\gamma_z[z,z_R]\cup \gamma_{z_R,y_R}$$
		is the desired John curve. 
		To this end, we estimate the length of $\beta_z$ and the distance $d(\eta,K)$ for any $\eta\in \beta_z$, respectively. 
        
        We start with the estimate on the length of $\beta_z$. Thanks to \eqref{radius larg} and \eqref{B collection}, for any pair of intersecting balls $D,D'\in \mathcal{B}_{j,R}$,  the line segments $L$ joining the center of $D$ with radius $r$ to the center of $D'$ with radius $r'$ satisfies 
		\begin{equation}\label{length estimate 2}
			L\subset D\cup D'\quad \text{and}\quad \ell(L)\le r+r'\le 4R.
		\end{equation}
		In particular, \eqref{radius larg} together with the facts that $L_{z_R,\hat{x}_2}\subset D_2$ and that $L_{\hat{x}_1,y_R}\subset D_1$ also yields
		$\ell(L_{z_R,\hat{x}_2})\le 2R$, $\ell(L_{\hat{x}_1,y_R})\le 2R$. 
		Therefore employing   \eqref{length estimate 2} and \eqref{lower size},  the construction of $\beta_z$ tells
		\begin{align}\label{length beta}
			\ell(\beta_z)&\le \ell(\gamma_z[z,z_R])+\ell(\gamma_{z_R,y_R}) \nonumber\\
			&\le Jd(z_R,\,K)+\ell(L_{z_R,\hat{x}_2})+\ell(L_{\hat{x}_1,y_R})+4\hat{N}(n,\,J)R \nonumber\\
			&\le C(n,\,J) R=:C_1R;
		\end{align}
		we may assume that	$C_1\ge 4$. 
		This gives the first part of the proposition.
		
		Towards \eqref{close omega}, for any $\eta\in \beta_z$, we need to estimate the distance $d(\eta, K)$ from above.
        First of all, note that when $\eta\in\gamma_{z_R,y_R}$, there exists some ball $D_{\eta}\in \mathcal{B}_{j,R}$ containing $\eta$. Then combining \eqref{lower size},\eqref{ball collect} and \eqref{radius larg}, we get 
		\begin{equation}\label{small ball}
			d(\eta,K)\ge d(D_\eta,\,K)\ge  \frac{R}{J}.
		\end{equation}	
		Let 
	\begin{equation}\label{carrot construct 2}
			J':=C_1 J.
		\end{equation}
		Then  combining \eqref{lower size}, \eqref{length beta} and \eqref{small ball},   we conclude
		$$\ell(\beta_z[z,\eta])\le \ell(\beta_z)\le C_1 R\le  J'd(\eta,K) \quad \text{ when } \eta\in\gamma_{z_R,y_R}. $$

On the other hand, when $\eta\in\gamma_x[x,x_R]$, since our construction yields $\beta_z[z,\,\eta]=\gz_z[z,\,\eta]$, which is particularly contained in a John curve, it follows that 
		$$ \ell(\beta_z[z,\eta])\le J d(\eta,\,K)\le J'd(\eta,K)\quad \text{ when }   \eta\in \gamma_z[z,z_R].$$
		This   implies \eqref{base point ball}.
		Moreover by Proposition~\ref{length car pro}, 
		every point $w\in car(\beta_z,J')$ also can be joined to $y_R$ by a rectifiable curve $\hat{\gamma}_w$ satisfying 	$$\ell(\hat{\gamma}_w)\le \ell(\beta_z)\quad \text{and }\quad car(\hat{\gamma}_w,J')\subset car(\beta_z,J').$$
		Hence, by employing  \eqref{base point ball}, the arbitrariness of $z$ gives the second formula in \eqref{close omega}. The first formula in \eqref{close omega} holds due to $z\in Cl(car(\beta_z,J'))$, the closure of the carrot, for any $z\in V_{j,R}$.
	\end{proof}
	
	We need two more technical lemmas.
	The first one states how to choose a smaller carrot in the union of two carrots. 
	
	\begin{figure}[ht]
		\centering
		\def\svgwidth{\columnwidth}
		\resizebox{0.7\textwidth}{!}{
\begingroup%
  \makeatletter%
  \providecommand\color[2][]{%
    \errmessage{(Inkscape) Color is used for the text in Inkscape, but the package 'color.sty' is not loaded}%
    \renewcommand\color[2][]{}%
  }%
  \providecommand\transparent[1]{%
    \errmessage{(Inkscape) Transparency is used (non-zero) for the text in Inkscape, but the package 'transparent.sty' is not loaded}%
    \renewcommand\transparent[1]{}%
  }%
  \providecommand\rotatebox[2]{#2}%
  \newcommand*\fsize{\dimexpr\f@size pt\relax}%
  \newcommand*\lineheight[1]{\fontsize{\fsize}{#1\fsize}\selectfont}%
  \ifx\svgwidth\undefined%
    \setlength{\unitlength}{209.76377106bp}%
    \ifx\svgscale\undefined%
      \relax%
    \else%
      \setlength{\unitlength}{\unitlength * \real{\svgscale}}%
    \fi%
  \else%
    \setlength{\unitlength}{\svgwidth}%
  \fi%
  \global\let\svgwidth\undefined%
  \global\let\svgscale\undefined%
  \makeatother%
  \begin{picture}(1,0.53631759)%
    \lineheight{1}%
    \setlength\tabcolsep{0pt}%
    \put(0,0){\includegraphics[width=\unitlength,page=1]{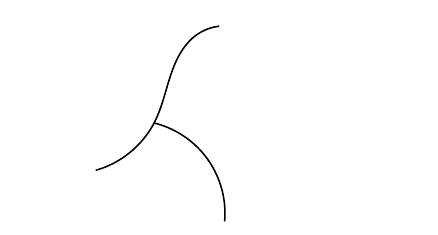}}%
    \put(0.325366,0.32447215){\makebox(0,0)[lt]{\lineheight{1.25}\smash{\begin{tabular}[t]{l}$\gamma_1$\end{tabular}}}}%
    \put(0.50592625,0.12335306){\makebox(0,0)[lt]{\lineheight{1.25}\smash{\begin{tabular}[t]{l}$\gamma_2$\end{tabular}}}}%
    \put(0.22078407,0.1296101){\makebox(0,0)[lt]{\lineheight{1.25}\smash{\begin{tabular}[t]{l}$z_1$\end{tabular}}}}%
    \put(0.52022805,0.02502817){\makebox(0,0)[lt]{\lineheight{1.25}\smash{\begin{tabular}[t]{l}$z_2$\end{tabular}}}}%
    \put(0.37363458,0.27173426){\makebox(0,0)[lt]{\lineheight{1.25}\smash{\begin{tabular}[t]{l}$y_2$\end{tabular}}}}%
    \put(0.51307715,0.4835797){\makebox(0,0)[lt]{\lineheight{1.25}\smash{\begin{tabular}[t]{l}$y_1$\end{tabular}}}}%
  \end{picture}%
\endgroup%
}
		\caption{The two curves $\gamma_1$ and $\gamma_2$ are presented, respectively, with their end points and the intersection point $y_2$. }
		\label{fig:gamma}
	\end{figure}
	
	\begin{lem}\label{Wj John}
		Let  $1\le J_1\le J_2$. 
		Assume that $z_1,z_2\in \mathbb R^n$ and $y_1\in\dot{\mathbb R}^n$. Let $\gamma_1$ be a rectifiable curve joining $z_1$ to $y_1$. If there exists a curve $\gamma_2$ joining $z_2$ to some point $y_2\in \gamma_1$, so that 
		\begin{equation}\label{joint}
			\frac{\ell(\gamma_2[z_2,y_2])}{J_2}\le \frac{\ell(\gamma_1[z_1,y_2])}{J_1},
		\end{equation}
		then, for any point $w\in \gamma_1[y_2,y_1)$, the curve $\hat{\gamma}:=\gamma_2\cup\gamma_1[y_2,w]$ joining $z_2$ to $w$ satisfies 
		
		\begin{equation}\label{inc 2}
			car(\hat \gamma,J_2)\subset car(\gamma_2,J_2)\cup car(\gamma_1,J_1).
		\end{equation}
		See Figure~\ref{fig:gamma} for a illustration. 
	\end{lem}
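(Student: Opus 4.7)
The plan is to take an arbitrary point $p \in car(\hat\gamma, J_2)$ and show it lies in one of the two carrots on the right-hand side, using a simple case analysis based on which ``half'' of $\hat\gamma$ produces the ball containing $p$.

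Unpacking the definition, we pick $\eta \in \hat\gamma \setminus \{z_2\}$ with $p \in B(\eta, \ell(\hat\gamma[z_2,\eta])/J_2)$, and split according to whether $\eta$ lies in the first piece $\gamma_2$ (from $z_2$ to $y_2$) or the second piece $\gamma_1[y_2,w]$. In the first case, $\hat\gamma[z_2,\eta] = \gamma_2[z_2,\eta]$, so the defining ball is literally one of the balls in the union defining $car(\gamma_2, J_2)$, and we are done. The boundary point $\eta = y_2$ is covered here as an endpoint of $\gamma_2$.

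In the second case, where $\eta \in \gamma_1[y_2,w] \setminus \{y_2\}$, we have
\begin{equation*}
\ell(\hat\gamma[z_2,\eta]) = \ell(\gamma_2[z_2,y_2]) + \ell(\gamma_1[y_2,\eta]).
\end{equation*}
Dividing by $J_2$ and then using the hypothesis \eqref{joint} on the first summand together with $J_1 \le J_2$ on the second yields
\begin{equation*}
\frac{\ell(\hat\gamma[z_2,\eta])}{J_2} \le \frac{\ell(\gamma_1[z_1,y_2])}{J_1} + \frac{\ell(\gamma_1[y_2,\eta])}{J_1} = \frac{\ell(\gamma_1[z_1,\eta])}{J_1},
\end{equation*}
where the last equality uses that $y_2$ lies between $z_1$ and $\eta$ along $\gamma_1$. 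Hence $B(\eta, \ell(\hat\gamma[z_2,\eta])/J_2) \subset B(\eta, \ell(\gamma_1[z_1,\eta])/J_1) \subset car(\gamma_1, J_1)$, and the inclusion \eqref{inc 2} follows.

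The argument is essentially a one-line length comparison, so there is no real obstacle; the only point requiring a bit of bookkeeping is the parametrization of $\hat\gamma$ from $z_2$ and the use of $J_1 \le J_2$ precisely on the ``second half'' summand, where the natural denominator is $J_2$ but we need to upgrade it to $J_1$ to combine with \eqref{joint}.
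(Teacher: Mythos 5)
Your proposal is correct and follows essentially the same route as the paper: split $\hat\gamma$ into the piece $\gamma_2$ (whose carrot balls lie in $car(\gamma_2,J_2)$) and the piece $\gamma_1[y_2,w]$, and for the latter use \eqref{joint} together with $J_1\le J_2$ to bound $\ell(\hat\gamma[z_2,\eta])/J_2$ by $\ell(\gamma_1[z_1,\eta])/J_1$, so each defining ball is absorbed into one of the two carrots on the right. No gaps.
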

	
	\begin{proof}
		We first note that
        \begin{equation}\label{car include1}
            car(\hat{\gamma}[z_2,y_2],J_2)\subset car(\gamma_2,J_2).
        \end{equation}
		In addition, for any $a\in \gamma_1[y_2,w]$, the assumption
		$J_1\le J_2$ together with \eqref{joint} yields
		\begin{align}\nonumber
			\frac{\ell(\hat \gamma[z_2,a])}{J_2}&=\frac{\ell(\gamma_2[z_2,\,y_2])}{J_2} +\frac{\ell(\gamma_1[y_2,a])}{J_2}\nonumber\\
			&\le \frac{\ell(\gamma_1[z_1,y_2])}{J_1}+\frac{\ell(\gamma_1[y_2,a])}{J_1} 
			\le \frac{\ell(\gamma_1[z_1,a])}{J_1}.\nonumber 
		\end{align}
		As a result, for any $a\in \gamma_1[y_2,w]$, the definition of $car(\gamma_1, J_1)$ tells that 
        $$B\left(a, \frac{\ell(\hat \gamma[z_2,a])}{J_2}\right)\subset B\left(a,\frac{\ell(\gamma_1[z_1,a])}{J_1}\right)\subset car(\gamma_1, J_1).$$  
     Thus, by recalling the definition of $car(\hat \gamma,J_2)$ and \eqref{car include1}, we finally get 
		\eqref{inc 2}.
	\end{proof}

	\begin{lem}\label{cigar repla}
		Let $x,y,z\in \mathbb{R}^n$ and $J\ge 1$. Assume that there exist two curves $\gamma_{x,z},\gamma_{y,z}$ respectively joining $x,y$ to $z$. We denote the parametrization of
		$$\gamma:=\gamma_{x,z}\cup\gamma_{y,z}$$
		starting from $x$ and ending at $y$ as  $\gamma_1$, and the one in the reversed direction, starting from $y$ and ending at $x$, as 
		$\gamma_2$. Then there exists a ball $B$ with center $a\in \gamma$ satisfying 
		$$car(\gamma_1[x,a],J)\cup car(\gamma_2[y,a],J)\subset car(\gamma_{x,z},J)\cup car(\gamma_{y,z},J)$$
		and 
		 radius $r $ satisfying
         $$r=\frac{\ell(\gamma_1[x,a])}{J}=\frac{\ell(\gamma_2[y,a])}{J}.$$
	\end{lem}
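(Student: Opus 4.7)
The plan is to take $a$ to be the ``midpoint'' of the concatenated curve in arclength. Set $L := \ell(\gamma_{x,z}) + \ell(\gamma_{y,z})$ and, by the symmetry between $x$ and $y$, assume without loss of generality that $\ell(\gamma_{x,z}) \ge \ell(\gamma_{y,z})$, so that $L/2 \le \ell(\gamma_{x,z})$. Continuity of the arclength function then supplies a point $a \in \gamma_{x,z}$ with $\ell(\gamma_{x,z}[x,a]) = L/2$. Since $\gamma_1$ traverses $\gamma_{x,z}$ from $x$ to $z$ and then the reverse of $\gamma_{y,z}$, while $\gamma_2$ traverses $\gamma_{y,z}$ from $y$ to $z$ and then the reverse of $\gamma_{x,z}$, direct bookkeeping gives
\[
\ell(\gamma_1[x,a]) = \ell(\gamma_{x,z}[x,a]) = \tfrac{L}{2}, \qquad \ell(\gamma_2[y,a]) = \ell(\gamma_{y,z}) + \ell(\gamma_{x,z}[a,z]) = \tfrac{L}{2}.
\]
Taking $r := L/(2J)$ and $B := B(a,r)$ therefore fulfills the radius condition.

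For the inclusion, I would treat the pieces of $\gamma_1[x,a]$ and $\gamma_2[y,a]$ separately. For any $\eta \in \gamma_1[x,a] = \gamma_{x,z}[x,a]$ the identity $\ell(\gamma_1[x,\eta]) = \ell(\gamma_{x,z}[x,\eta])$ shows that the defining ball of $car(\gamma_1[x,a],J)$ at $\eta$ coincides with one of the defining balls of $car(\gamma_{x,z},J)$. Similarly, for $\eta$ lying on the $\gamma_{y,z}$-portion of $\gamma_2[y,a]$ one has $\ell(\gamma_2[y,\eta]) = \ell(\gamma_{y,z}[y,\eta])$, so the ball at $\eta$ is already a defining ball of $car(\gamma_{y,z},J)$.

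The only case requiring a small estimate is $\eta$ on the ``reversed $\gamma_{x,z}$'' portion of $\gamma_2[y,a]$, i.e.\ $\eta \in \gamma_{x,z}[a,z]$. There one computes
\[
\ell(\gamma_2[y,\eta]) = \ell(\gamma_{y,z}) + \ell(\gamma_{x,z}[\eta,z]) = L - \ell(\gamma_{x,z}[x,\eta]),
\]
and since $\eta$ lies between $a$ and $z$ we have $\ell(\gamma_{x,z}[x,\eta]) \ge \ell(\gamma_{x,z}[x,a]) = L/2$, giving $\ell(\gamma_2[y,\eta]) \le \ell(\gamma_{x,z}[x,\eta])$. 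Hence the ball $B(\eta, \ell(\gamma_2[y,\eta])/J)$ is contained in the ball $B(\eta, \ell(\gamma_{x,z}[x,\eta])/J) \subset car(\gamma_{x,z},J)$, which completes the desired inclusion. The only real ``obstacle'' is careful bookkeeping of the two opposite parametrizations of $\gamma$; there is no analytic subtlety once the arclength midpoint is identified as the correct choice of $a$.
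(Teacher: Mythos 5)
Your proposal is correct and follows essentially the same route as the paper: your arclength-midpoint choice of $a$ (i.e.\ $\ell(\gamma_{x,z}[x,a])=L/2$) is exactly the paper's condition $\ell(\gamma_{x,z}[x,a])=\ell(\gamma_{y,z})+\ell(\gamma_{x,z}[z,a])$, and your case analysis, with the key inequality $\ell(\gamma_2[y,\eta])\le \ell(\gamma_1[x,\eta])$ on the portion $\gamma_{x,z}[a,z]$, matches the paper's argument verbatim in substance.
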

	\begin{rem}
		Lemma \ref{cigar repla} is a corollary following from \cite[Theorem 3.6]{V1994} and \cite[Lemma 4.3]{V1994}. Since \cite[Lemma 4.3]{V1994} has used the concept of cigar in the statement, for the sake of completeness,  we   provide a  proof  avoiding the concept of  ``cigar'' in the Appendix~\ref{carrot to cigar}. 
	\end{rem}
	
	Now we are ready to prove Theorem~\ref{John component}.
	\begin{proof}[Proof of Theorem~\ref{John component}]

		We construct a sequence $\{W_{j,\infty}\}_{j\in\{1,\cdots,N\}}$  inductively.

		\noindent{\bf Step 1: Construct  $W_{1,\infty}$.}
		We start from a point $x_1\in \mathbb{R}^n\setminus K$ close to the origin. Then for any $R\ge 1$, the corresponding point $(x_1)_R \in\gamma_x\cap \partial B_{3R}$ is covered by some ball, say $D_R\in \mathcal{B}_{1,R}.$  Thus, by \eqref{VR}, we know that 
		
		\begin{equation}\label{x_1 belong}
			x_1\in V_{1,R}.
		\end{equation}
		Recall the definition \eqref{Omega def}. Let 
		
		$$W_{1,R}:=\Omega_{1,R,x_1},\quad \text{ and } \quad W_{1,\infty}:=\bigcup_{R\ge 1}W_{1,R}.$$
		Then from \eqref{x_1 belong} and \eqref{Omega def}, it follows that 
		$$car(\gamma_{x_1}[x_1,\,(x_{1})_R],\,J)\subset W_{1,\,R},\quad \text{ and } \quad car(\gamma_{x_1},J)\subset W_{1,\infty},$$
		and from Proposition \ref{omega cons} that $W_{1,\,R}$ is $J'$-carrot John domain with  $W_{1,R}\subset B_{2C_1R}$.

		\noindent{\bf Step 2: Proceeding inductively to construct  $\{W_{j,\infty}\}$.}	
		We run the induction based on the two subindices $j$ and $r$ for $W_{j,\,r}$. 
		
		For any $r>0$, define $x_{1,\,r}:=x_1$. Suppose that for some $m\ge 1$,  via the induction process, we have obtained points $\{x_j\}_{j=1}^m$ and the corresponding sets $\{W_{j,R}\}_{j=1}^{m}$ so that for any $ 1\le j\le m $, $R> |x_j|$ and some $r=r(R,\,j)>0$, 
		\begin{equation*}\label{introduction}
			W_{j,R}:=\Omega_{j,R,x_{j,r}},\quad  \text{ and } \quad W_{j,\infty}:=\bigcup_{R> |x_j|} W_{j,R}\quad\text{ for some } r<R. 
			\qquad  
		\end{equation*}

		Suppose that,  for  some $s>0$
		\begin{equation}\label{uncover}
			B_{s}\setminus \left(K \cup \bigcup_{j=1}^m  \overline{W}_{j,\,s} \right)\neq \emptyset.
		\end{equation}
		This yields the existence of another point in $B_{s}\setminus \left(K \cup \bigcup_{j=1}^m  \overline{W}_{j,\,s} \right)$. Take $r>0$ to be (almost) the smallest $s>0$ for which \eqref{uncover} holds. Next, we consider two cases.
		
		\noindent{\bf Case 1: } Suppose that
		\begin{equation}\label{case 1}
			B_r\setminus K\subset \bigcup_{R\ge r}\bigcup_{j=1}^{m}V_{j,R}.
		\end{equation}

		\begin{figure}[ht]
			\centering
			\def\svgwidth{\columnwidth}
			\resizebox{0.7\textwidth}{!}{
\begingroup%
  \makeatletter%
  \providecommand\color[2][]{%
    \errmessage{(Inkscape) Color is used for the text in Inkscape, but the package 'color.sty' is not loaded}%
    \renewcommand\color[2][]{}%
  }%
  \providecommand\transparent[1]{%
    \errmessage{(Inkscape) Transparency is used (non-zero) for the text in Inkscape, but the package 'transparent.sty' is not loaded}%
    \renewcommand\transparent[1]{}%
  }%
  \providecommand\rotatebox[2]{#2}%
  \newcommand*\fsize{\dimexpr\f@size pt\relax}%
  \newcommand*\lineheight[1]{\fontsize{\fsize}{#1\fsize}\selectfont}%
  \ifx\svgwidth\undefined%
    \setlength{\unitlength}{209.76377106bp}%
    \ifx\svgscale\undefined%
      \relax%
    \else%
      \setlength{\unitlength}{\unitlength * \real{\svgscale}}%
    \fi%
  \else%
    \setlength{\unitlength}{\svgwidth}%
  \fi%
  \global\let\svgwidth\undefined%
  \global\let\svgscale\undefined%
  \makeatother%
  \begin{picture}(1,0.53631759)%
    \lineheight{1}%
    \setlength\tabcolsep{0pt}%
    \put(0,0){\includegraphics[width=\unitlength,page=1]{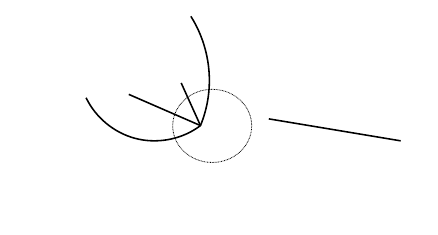}}%
    \put(0.56660907,0.18321345){\makebox(0,0)[lt]{\lineheight{1.25}\smash{\begin{tabular}[t]{l}$R$\end{tabular}}}}%
    \put(0.69362038,0.09876402){\makebox(0,0)[lt]{\lineheight{1.25}\smash{\begin{tabular}[t]{l}$R''$\end{tabular}}}}%
    \put(0.33879827,0.32766992){\makebox(0,0)[lt]{\lineheight{1.25}\smash{\begin{tabular}[t]{l}$V_{2,\,R'}$\end{tabular}}}}%
    \put(0.41909867,0.33854472){\makebox(0,0)[lt]{\lineheight{1.25}\smash{\begin{tabular}[t]{l}$V_{3,\,R}$\end{tabular}}}}%
    \put(0.26306752,0.42667909){\makebox(0,0)[lt]{\lineheight{1.25}\smash{\begin{tabular}[t]{l}$V_{1,\,R''}$\end{tabular}}}}%
    \put(0,0){\includegraphics[width=\unitlength,page=2]{tree.pdf}}%
    \put(0.63088523,0.12659942){\makebox(0,0)[lt]{\lineheight{1.25}\smash{\begin{tabular}[t]{l}$R'$\end{tabular}}}}%
    \put(0.73369747,0.27719956){\makebox(0,0)[lt]{\lineheight{1.25}\smash{\begin{tabular}[t]{l}$V_{2,\,R''}$\end{tabular}}}}%
    \put(0.61756611,0.03622702){\makebox(0,0)[lt]{\lineheight{1.25}\smash{\begin{tabular}[t]{l}$V_{3,\,R''}$\end{tabular}}}}%
  \end{picture}%
\endgroup%
}
			\caption{The set $W_{3,\,R}$ is contained in $W_{2,\,R'}$, and $W_{2,\,R'}$ is contained in $W_{1,\,R''}$, where  $R\le R'\le  R''$. Eventually they are all contained in $W_{1,\,\infty}$. However, we note that $W_{3,\,R}$ and $W_{3,\,R''}$ could have no intersection. }
			\label{fig:tree}
		\end{figure}

		Since $V_{j,\,r}$ is a decomposition of $B_r\setminus K$ for any $r>0$,  \eqref{uncover} and \eqref{close omega} imply that there exists some  point $x_{m+1,\,r}\in V_{{m+1},r}$,
		and
		\begin{equation}\label{gamma m+1}
			car\left(\gamma_{x_{m+1,\,r}}[x_{m+1,\,r},\,(x_{m+1,\,r})_r],J\right)\subset  \Omega_{m+1,r,\,x_{m+1,\,r}} 
		\end{equation}
		according to Proposition \ref{omega cons}.
		Now
		from \eqref{case 1} it follows that for some $R> r$ , we have
		\begin{equation}\label{be eaten}
			x_{m+1,r}\in V_{k,\,R}\neq \emptyset, \quad \text{ for some $1\le k\le m$.}
		\end{equation}
		Let $R'$ be the infimum among all positive number for which \eqref{be eaten} happens. 
		Then if $R'>r$, we define
		$$W_{m+1,\,s}:=\Omega_{{m+1},\,s,\,x_{m+1,\,r}}.$$
		for $r\le s< R'$. If $R'=r$, then we only define $W_{m+1,\,s}:=\Omega_{{m+1},\,s,\,x_{m+1,\,r}}$ for $s=r$.

		In addition, it follows from \eqref{gamma m+1} that
		$$car\left(\gamma_{x_{m+1,\,r}}[x_{m+1,\,r},\,(x_{m+1,\,r})_{s}],J\right)\subset W_{m+1,\,s}$$
		and from Proposition \ref{omega cons} that $W_{m+1,\,s}$ is  $J'$-carrot John domain with  $W_{m+1,s}\subset B_{2C_1s}$.
		
		Next we check if
		\begin{equation}\label{nonempty}
			B_{r}\setminus \left(K \cup \bigcup_{j=1}^{m+1}  \overline{W}_{j,\,r} \right)\neq \emptyset.
		\end{equation}
		If it is non-empty, we continue to define $W_{m+2,\,r}$ and iterate our process. Otherwise, we define $W_{j,r}:=\emptyset$ for $j\in\{m+1,\cdots, \hat N\}$. Then increase $r$ until  \eqref{nonempty} holds for some $r'\ge r$, and consider the set $W_{m+1,\,r'}$.

		\noindent{\bf Case 2: }	If \eqref{case 1} fails, then there exists a point $x_{m+1,r}\notin V_{j,R}$ for any $k\in\{1,\cdots,m\}$ and $R\ge r$. Since $\{V_{j,\,R}\}$ decomposes 
		$B_R\setminus K$, then for every $R>r$, there exists $k_R\in\{m+1,\cdots,\hat N\}$ such that
  $x_{m+1,r}\in V_{k_R,R}$. Then up to relabeling the first subindex of $\{V_{j,R}\}_{j=m+1}^{\hat N}$, we may assume that  $x_{m+1,r}\in V_{m+1,R}$. 
  
As we need to define $W_{m+1,\,\infty}$ later, in order to distinguish from the first case, we write $x_{m+1}:=x_{m+1,r}$ (also recall that $x_1:=x_{1,\,r}$ at the beginning of Step 2). Then define 
		$$W_{m+1,\,s}:=\Omega_{m+1,\,s,\,x_{m+1}} \quad \text{  for all $s> |x_{m+1}|$}, $$
		and let 
		\begin{equation}\label{infty constr}
			W_{m+1,\infty}:=\bigcup_{R> |x_{m+1}|}W_{m+1,R}. 
		\end{equation}
		Likewise,   
		\eqref{close omega} gives
		$$ car(\gamma_{x_{m+1}}[x_{m+1},\,(x_{m+1})_R],\,J)\subset W_{{m+1},\,R}\quad\forall R>|x_{m+1}|,\quad car(\gamma_{x_{m+1}},J)\subset W_{m+1,\infty},$$
		and from Proposition \ref{omega cons} that $W_{m+1,\,R}$ is  $J'$-carrot John domain with  $W_{m+1,R}\subset B_{2C_1R}$. 
		
		\noindent{\bf Step 3: Uniformly finitely many $W_{j,\,\infty}$.}	
		
		Our process is stopped when, for any $R>0$, 
		
		$$			B_{R}\setminus \left(K \cup \bigcup_{j=1}^{\hat{N}}  \overline{W}_{j,\,R} \right)= \emptyset$$
		and in particular, all $W_{j,\infty}$ have been founded so that 
		\begin{equation}\label{infty cover}
			\mathbb{R}^n=\left(K \cup \bigcup_{j=1}^N \overline{W}_{j,\infty} \right)
		\end{equation}
		for some $N\le \hat{N}$. Suppose that \eqref{infty cover} is not true. Then there exists a point
		$$z\in \mathbb{R}^n\setminus  \left(K \cup \bigcup_{j=1}^N \overline{W}_{j,\infty}\right).$$
		Further observe that  \eqref{infty constr} and \eqref{close omega} give $V_{j,R}\subset \overline{W}_{j,\infty}$.  Then our induction process tells that we can obtain  a new set $W_{N+1,\infty}$ according to $z\notin V_{j,R}$ for any $j\in\{1,\cdots,N\}$ and those sufficiently large  $R$, which is impossible.

		Moreover,  for any $1\le j\le \hat N$, $W_{j,\,R}$ is a $J'$-carrot John domain with  $W_{j,R}\subset B_{2C_1R}$, and for each $R\ge 1$, there exists $0<r\le R$ so that 
		\begin{equation}\label{carrot in W}
			car(\gamma_{x_j,\,r}[x_{j,\,r},\,(x_{j,\,r})_R],\,J)\subset W_{j,\,R}
		\end{equation}
		and
		\begin{equation}\label{dage}
			car(\gamma_{x_{j}},J)\subset W_{j,\infty}\quad \text{for any }1\le j\le N. 
		\end{equation}

		\noindent{\bf Step 4:  $W_{j,\infty}$ is $J'$-carrot John  with John center $\infty$.}
		Fix $j\in\{1,\cdots,N\}$. 
		For every $z\in W_{j,\infty}$, it follows that	$z\in W_{j,\,R}$ for some $R>|x_j|$. Hence, thanks to Proposition \ref{omega cons}, $z$ can be joined to  $(x_j)_{R}$ by a rectifiable curve $\beta_z$ as the core of a $J'$-carrot satisfying  
		\begin{equation}\label{relate4}
			\ell(\beta_z)\le C_1 R \quad   \text{ and }\quad car(\beta_z,J')\subset W_{j,\infty},
		\end{equation}
		In addition, by employing the definition of $J'$ \eqref{carrot construct 2} and \eqref{lower size}, Proposition \ref{omega cons} tells
	\begin{equation}\label{relate5}
			\frac{\ell(\beta_z)}{J'}\le \frac{R}{J}\le \frac{\ell(\gamma_{x_j}[x_j,(x_j)_{R}])}{J}. 
		\end{equation}
		Further note that  $(x_j)_{R}\in \gamma_{x_j}$. Then by employing \eqref{relate5},  Lemma~\ref{Wj John} tells that the curve $\zeta_z:=\beta_z\cup \gamma_{x_j}[(x_j)_{R},\infty)$ joining $z$ toward $\infty$ satisfies  
		\begin{equation*}
			car (\zeta_z,J')\subset car(\beta_z,J')\cup car(\gamma_{x_{j}},J).
		\end{equation*}		
		Moreover, \eqref{dage} and \eqref{relate4} yield
		$$		car(\beta_z,J')\cup car(\gamma_{x_{j}},J) \subset W_{j,\infty}. $$
		Thus the arbitrariness of $z$ implies that $W_{j,\,\infty}$ is $J'$-carrot John with John center $\infty$. 
		
		\noindent{\bf Step 5:   Proof of \eqref{Boman chain} and \eqref{cigar ball}.} In addition, for each pair of  points $z,w\in W_{j,\infty}$,  we can find $R_z,\,R_w>|x_j|$, such that $z\in W_{j,R_z}$ and $w\in W_{j,R_w}$. We may assume  $R_z\le R_w$. Then  Step 4 gives us  two curves $\bz_z,\,\bz_w$ joining $z,w$ to $(x_j)_{R_z},(x_j)_{R_w}$, respectively, such that 
		$$car(\beta_w,J')\subset W_{j,\infty},\quad car(\beta_z,J')\subset W_{j,\infty}\quad\text{and}\quad  \frac{\ell(\beta_z)}{J'}\le \frac{\ell(\gamma_{x_j}[x_j,(x_j)_{R_z}])}{J};$$
		see \eqref{relate4} and \eqref{relate5}. 
		Therefore, applying \eqref{dage} and Lemma \ref{Wj John} with $\gamma_1=\gamma_{x_j}[x_j,(x_j)_{R_w}]$,  $ \gamma_2= \beta_z$, and
		$$J=J_1\le J_2=J',$$
		there is a curve $\hat{\gamma}:=\beta_z\cup \gamma_{x_j}[(x_j)_{R_z},(x_j)_{R_w}]$ joining $z$ to $(x_j)_{R_w}$, such that
		$$car(\hat{\gamma},J')\subset car(\beta_z,J')\cup car(\gamma_{x_j},J)\subset W_{j,\infty}.$$
		Then, by Lemma \ref{cigar repla}, we finally arrive at \eqref{Boman chain} and \eqref{cigar ball}.

		\noindent{\bf Step 6: Proof of \eqref{volumn relat} and \eqref{volumn relat 2}.}		The remaining task is to prove \eqref{volumn relat} and \eqref{volumn relat 2}. As $W_{j,R}$ is a $J'$-carrot John domain with center $(x_{j,r})_{R}$ with some $r<R$, it follows from the definition of John  domain that
		$$B\left(x_{j,r},\frac{\ell(\gamma_{x_{j,r}}[x_{j,r},(x_{j,r})_{R}])}{J}\right)\subset car(\gamma_{x_{j,r}}[x_{j,r},(x_{j,r})_{R}],J)\subset W_{j,R}\subset B\left((x_{j,r})_{R},J'd((x_{j,r})_{R},K)\right).$$
		As a result, by \eqref{lower size} and \eqref{radius larg}, the above estimate yields that 
		$$C(n,\,J)^{-1}R^n\le |W_{j,R}|\le C(n,\,J)R^n$$
		and then the inequality  \eqref{volumn relat} follows.

		Furthermore,  given $k\in \{1,\cdots, \hat N\}$, we consider the set $W_{k,\,R}$ which  contains the carrot
		$$car(\gamma_{x_{k,\,r}}[x_{k,\,r},\,(x_{k,\,r})_R],\,J)$$
		by \eqref{carrot in W}. Then  we choose $1\le k_l\le \hat N$ so that
		$x_{k,\,r}\in V_{k_l,\,2^l R};$
		such a $k_l$ exists since $\{V_{j,\,2^l R}\}_j$ covers $B_{2^l R}\setminus K. $

		Toward the inequality \eqref{volumn relat 2}, recall that  $W_{k_l,\,2^l R}$ is constructed via Proposition~\ref{omega cons}, which, in particular by the definition of $\beta_{x_{k,\,r}}$, contains the carrot
		$$car(\gamma_{x_{k,\,r}}[x_{k,\,r},\,(x_{k,\,r})_{2^l R}],\,J'); $$
		recall that
		$$\gamma_{x_{k,\,r}}[x_{k,\,r},\,(x_{k,\,r})_{2^l R}]\subset \beta_{x_{k,\,r}}.$$
		Especially, 
		$$car(\gamma_{x_{k,\,r}}[x_{k,\,r},\,(x_{k,\,r})_{2^l R}],\,J')\subset W_{k_l,\,2^l R}\cap W_{k_{l+1},\,2^{l+1} R}, $$
		and  \eqref{volumn relat 2} follows from \eqref{volumn relat} as
		$$\Big|car(\gamma_{x_{k,\,r}}[x_{k,\,r},\,(x_{k,\,r})_{2^l R}],\,J')\Big|\ge C(n,\,J) (2^l R)^{n}.$$ 
		
	\end{proof}

	\appendix

	\section{Proof of the estimate \eqref{lower es}}\label{lower bound}
	\begin{proof}
		Let  $$\hat{J}:=J(\hat{x},\Omega;\hat{y}).$$
		Then using Lemma \ref{J eqiup}, there exists a rectifiable curve  $\beta\subset\Omega$ joining $\hat{x}$ to $\hat{y}$ together with the corresponding $\hat{J}$-carrot $car(\beta,\hat{J})$, such that 
		\begin{equation}\label{looking for F}
			\sup_{t\in [0,1]}j(t;\hat{x},\beta,\Omega)=J(\hat{x},\Omega;\hat{y})=\hat{J}\quad \text{and}\quad car(\beta,\hat{J})\subset\Omega.
		\end{equation}
		Analogously, thanks to the compactness of $[0,1]$, the definition of $j(t;\hat{x},\beta,\Omega)$ tells that we can find a point $\hat s\in [0,1]$, such that 
		$$\frac{\ell_{\|\cdot\|}(\beta([0,\hat s]))}{d_{\Vert\cdot\Vert}(\beta(\hat s),\partial\Omega)}=\hat{J}=\sup_{t\in 
 [0,1]}j(t;\hat{x},\beta,\Omega).$$

		We repeat the argument by replacing $\gamma,x$ and $J$ respectively by $\beta,\hat{x}$ and $\hat{J}$ in the proof of \eqref{sp1.5}. Then we have 
		\begin{equation}\label{replace 1.5}
			d_{\Vert \cdot\Vert}(\beta,\partial\Omega)\ge\frac{d_{\Vert \cdot\Vert}(\hat{x},\partial\Omega)}{2C_{\|\cdot\|}\hat{J}}.
		\end{equation}
		Further assume that $L_{x,\hat{x}}\subset\Omega$ is the line segment joining  
		$x$ to $\hat{x}$ and $L_{\hat{y},y}\subset\Omega$ is the one joining
		$\hat{y}$ to $y$.  Then 
		$$\hat{\beta}:=L_{x,\hat{x}}\cup \beta\cup L_{\hat{y},y}$$
		is a rectifiable curve within $\Omega$ joining $x$ to $y$. Now we also repeat the argument by replacing $s,J,\gamma$ and $\hat{\gamma}$ with $\hat s,\hat{J},\beta$ and $\hat{\beta}$, respectively, and swapping $x,y$ respectively with $\hat{x},\hat{y}$, respectively. By letting $(\hat x,\hat y)$ close enough to $(x,y)$, \eqref{sp3} changes into 
		\begin{multline}\label{estimate on hat f}
			\frac{\ell_{\|\cdot\|}(\hat \beta([0,t]))}{d_{\Vert\cdot\Vert}(\hat \beta(t),\partial\Omega)}
			-\frac{\ell_{\|\cdot\|}(\beta([0,\hat s]))}{d_{\Vert\cdot\Vert}(\beta(\hat s),\partial\Omega)}\\
			\le\max \left\{
			\frac{\Vert \hat{x}-x\Vert}{d_{\Vert\cdot\Vert}(\beta,\partial\Omega)},\,
			\frac{C(n,C_{\|\cdot\|},\, \hat{J}) }{ d_{\Vert\cdot\Vert}(\hat{y},\partial\Omega)}\left(\Vert \hat{x}-x\Vert +\Vert \hat{y}-y\Vert \right) ,\,
			\frac{2\Vert \hat{x}-x\Vert}{d_{\Vert\cdot\Vert}(\hat{x},\partial\Omega) } \right\}
		\end{multline}
		for any $ z\in \hat{\beta}$.
		Further note that when $\Vert x-\hat{x}\Vert+\Vert y-\hat{y}\Vert<\delta$ for a sufficiently small and  positive constant $\delta=\delta(x,y,C_{\|\cdot\|})$ satisfying $\delta\le\frac{1}{2}\min\{d_{\Vert\cdot\Vert}(
		x,\partial\Omega),d_{\Vert\cdot\Vert}(
		y,\partial\Omega)\}$ at least, by \eqref{sp1} and \eqref{looking for F},
		the estimate \eqref{upper}
		gives  
		\begin{equation}\label{hat J control}
			\hat{J}\le C(n,C_{\|\cdot\|},\,J).
		\end{equation}
		Consequently, combining the construction of $\hat{\beta}$, \eqref{estimate on hat f}, \eqref{replace 1.5} and \eqref{hat J control}, it follows that  when $\Vert x-\hat{x}\Vert+\Vert y-\hat{y}\Vert<\delta$,
		\begin{align}\label{low estimate}
			&J(x,\Omega;y)-J(\hat{x},\Omega;\hat{y})\nonumber\\
			\le&\sup_{t\in [0,1]}\frac{\ell_{\|\cdot\|}(\hat \beta([0,t]))}{d_{\Vert\cdot\Vert}(\hat \beta(t),\partial\Omega)}
			-\frac{\ell_{\|\cdot\|}(\beta([0,\hat s]))}{d_{\Vert\cdot\Vert}(\beta(\hat s),\partial\Omega)}\nonumber\\
			\le &\max \left\{
			\frac{\Vert \hat{x}-x\Vert}{d_{\Vert\cdot\Vert}(\beta,\partial\Omega)},\,
			\frac{C(n,C_{\|\cdot\|},\, \hat{J}) }{ d_{\Vert\cdot\Vert}(\hat{y},\partial\Omega)}\left(\Vert \hat{x}-x\Vert +\Vert \hat{y}-y\Vert \right) ,\,
			\frac{2\Vert \hat{x}-x\Vert}{d_{\Vert\cdot\Vert}(\hat{x},\partial\Omega) } \right\}\nonumber\\
			\le &  \frac{C(n,C_{\|\cdot\|},\, \hat{J}) }{d_{\Vert\cdot\Vert}(\beta,\partial\Omega)}\left(\Vert \hat{x}-x\Vert +\Vert \hat{y}-y\Vert \right)\le \frac{C(n,C_{\|\cdot\|},\, \hat{J}) }{d_{\Vert\cdot\Vert}(\hat{x},\partial\Omega)}\left(\Vert \hat{x}-x\Vert +\Vert \hat{y}-y\Vert \right)\nonumber \\
			\le &\frac{C(n,C_{\|\cdot\|},\, \hat{J}) }{d_{\Vert\cdot\Vert}(x,\partial\Omega)}\left(\Vert \hat{x}-x\Vert +\Vert \hat{y}-y\Vert \right)\le\frac{C(n,C_{\|\cdot\|},\, J) }{d_{\Vert\cdot\Vert}(x,\partial\Omega)}\left(\Vert x-\hat{x}\Vert +\Vert y-\hat{y}\Vert \right),
		\end{align}
		which yields \eqref{lower es}.
		
	\end{proof}
	
	

	\section{Proof of Lemma \ref{cigar repla}}\label{carrot to cigar}
	\begin{proof}
		We may assume that $\ell(\gamma_{x,z})\ge \ell(\gamma_{y,z})$.  Then there exists a point $a \in \gamma_{x,z}$, such that 
        \begin{equation}\label{length form}
            \ell(\gamma_{x,z}[x,a])=\ell(\gamma_{y,z})+\ell(\gamma_{x,z}[z,a]).
        \end{equation}
Note that the construction of $\gamma $ tells that 
\begin{equation}\label{curve combine}
    \gamma_1[x,a]=\gamma_{x,z}[x,a],\quad \gamma_2[y,a]=\gamma_{y,z}\cup\gamma_1[z,a]. 
\end{equation}
Then, due to \eqref{length form} and \eqref{curve combine}, it follows that  $\ell(\gamma_1[x,a])=\ell(\gamma_2[y,a])$. This implies that 
 for  each $\eta\in \gamma_2[z,a]=\gamma_{x,z}[z,a]$, 
\begin{equation}\label{length compare}
\ell(\gamma_2[y,\eta])\le \ell(\gamma_2[y,a])=\ell(\gamma_1[x,a])\le \ell(\gamma_1[x,\eta]).    
\end{equation}	
Besides, \eqref{curve combine} directly yields that
	\begin{equation}\label{sp1212}
			car(\gamma_1[x,a],J)\subset car(\gamma_{x,z},J),\quad  car(\gamma_{y,z},J)= car(\gamma_2[y,z],J),
		\end{equation}
		which, together with the definition of $car(\gamma_{y,z},J)$ and \eqref{length compare}, implies that 	$$car(\gamma_1[x,a],J)\cup car(\gamma_2[y,a],J)\subset car(\gamma_{x,z},J)\cup car(\gamma_{y,z},J).$$
		As a result, the desired  ball is   $$B=B\left(a,\frac{\ell(\gamma_1[x,a])}{J}\right).$$ 
        The proof is completed.
	\end{proof}

\end{document}